\documentclass[12pt]{amsart}
\usepackage[top=2.5cm, bottom=2.5cm, left=3cm, right=2.8cm]{geometry}
\usepackage{amsthm}

\usepackage{graphicx}
\usepackage{xcolor}
\usepackage{fancyhdr}
\usepackage{comment}
\usepackage[colorlinks=true, linkcolor=blue!60!black, citecolor=blue!60!black, urlcolor=blue!60!black]{hyperref}

\usepackage{graphicx}
\usepackage{tikz}
\usepackage{xcolor}
\usepackage{fancyhdr}
\theoremstyle{plain}
\newtheorem{maintheorem}{Theorem}

\newtheorem{theorem}{Theorem}[section]
\newtheorem{lemma}[theorem]{Lemma}

\newtheorem{corollary}[theorem]{Corollary}

\theoremstyle{definition}

\newtheorem*{assumption}{Assumption}
\theoremstyle{remark}

\usepackage[colorlinks=true, linkcolor=blue!60!black, citecolor=blue!60!black, urlcolor=blue!60!black]{hyperref}

\begin{document}

\title[An upper bound for the mean Lorentz force of magnetic flows]{An upper bound for the mean Lorentz force of magnetic flows without conjugate points in terms of the geodesic curvature of horocycles}

\author{Anthony J. García}
\address{}
\curraddr{}
\email{}
\thanks{}

\author{Rafael O. Ruggiero}
\address{}
\curraddr{}
\email{}
\thanks{}


\date{}

\dedicatory{}

\commby{}

\keywords{Magnetic flows, conjugate points, Mañé critical value, geodesic curvarture, horocycles}

\begin{abstract}
We study magnetic flows on closed Riemannian surfaces of genus at least two. Let $(M,g)$ be a surface without focal points and let $\Omega$ be a magnetic field. We prove that, above the Mañé critical value, the absence of conjugate points imposes geometric restrictions on the magnetic flow. First, we obtain a bound for the integral of the Lorentz force in terms of the geodesic curvature of the horocycles of the underlying Riemannian metric. As a consequence, a lower bound on the Gaussian curvature yields an explicit bound in terms of the area of the surface. The proofs combine the global geometry of magnetic geodesics with Liouville's formula for the geodesic curvature in the orthogonal coordinates given by the level sets and geodesics of a Busemann function on the universal cover.
\end{abstract}

\maketitle

\section{Introduction}
Magnetic flows model the motion of a charged particle on a Riemannian
manifold under the action of a magnetic field. Given a closed $2$-form
$\Omega$ on $(M,g)$, the flow is the Hamiltonian flow of the kinetic energy
$E(x,v)=\frac12\|v\|_g^2$ with respect to the twisted symplectic form
$\omega_{\mathrm{can}}+\pi^*\Omega$. Unlike the
geodesic flow, its dynamics depends on the energy level, and the relevant dividing line is the Mañé critical value $c(g,\Omega)$. 

Throughout, $(M,g)$ is a closed oriented surface of genus $\geq2$ and
$\Omega$ is a closed $2$-form on $M$. Since $\dim M=2$, there is a unique
function $f\in C^\infty(M)$, the \emph{Lorentz force} of $\Omega$, such that
\[
\Omega=f\,dA_g ,
\]
where $dA_g$ is the Riemannian area form, and a unit-speed magnetic geodesic
$\sigma$ satisfies
\[
k_\sigma(t)=f\bigl(\sigma(t)\bigr).
\]
Magnetic geodesics of energy $\frac12$ are therefore exactly the curves of
prescribed geodesic curvature $f$. This relation suggests that the absence of conjugate points should
impose a restriction on the Lorentz force in terms of the geometry
of $(M,g)$. Therefore, requiring the magnetic flow to have no conjugate points should put a bound on $f$ in terms of the geometry of $(M,g)$.

Our first result makes this precise. We denote by $\mu_l$ the Liouville
measure on $T_1M$, which is preserved by the magnetic flow of $(g,\Omega)$
(see \cite{Paternain2006}). We then have the following theorem:

\begin{maintheorem}\label{Teo A}
Let $(M,g,\Omega)$ be a magnetic system, where $(M,g)$ is a Riemannian
surface without focal points. Suppose that $c(g,\Omega)<\frac12$ and that
the magnetic flow on the energy level $E^{-1}(\frac12)$ has no conjugate
points. Then
\[
\left|\int_M f\,dA_g\right|
\leq\frac{1}{2\pi}\int_{T_1M}\kappa\,d\mu_l ,
\]
where $\kappa(\theta)$ is the geodesic curvature at $\pi(\theta)$ of the
horocycle centred at the endpoint at infinity of the lifted unit-speed
magnetic geodesic with initial velocity $\theta$.
\end{maintheorem}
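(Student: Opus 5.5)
We work on the universal cover $\tilde M$, which is a Hadamard-type plane because $(M,g)$ has no focal points. The plan is to compare each lifted magnetic geodesic with a Riemannian Busemann function. Let $\theta\in T_1M$ and lift it to $\tilde\theta\in T_1\tilde M$. Since $c(g,\Omega)<\frac12$, the lifted magnetic geodesics of energy $\frac12$ are quasi-geodesics. So the forward magnetic geodesic $\tilde\sigma_\theta$ converges to a point $\xi(\theta)\in\partial\tilde M$; I take this stability statement as known from earlier work on magnetic flows above the critical value. Let $b_\xi$ be the Busemann function of $g$ centred at $\xi(\theta)$. Its level sets are the horocycles, and its gradient lines are the geodesics asymptotic to $\xi$. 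No focal points gives that $b_\xi$ is $C^2$ and that the horocycles have geodesic curvature $\kappa\ge 0$.

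In the orthogonal coordinates $(s,t)$, with $t=b_\xi$ and $s$ the horocycle parameter, the metric is $ds^2\cdot G(s,t)+dt^2$. Write $\phi$ for the angle between $\dot{\tilde\sigma}$ and $-\nabla b_\xi$. Liouville's formula then gives
\[
k_{\tilde\sigma}=\frac{d\phi}{d\tau}+\kappa\,\sin\phi
\]
along $\tilde\sigma$, up to sign and orientation conventions for $\kappa$ as the curvature of the level curves. Since $k_{\tilde\sigma}=f(\sigma)$, integrating along the orbit up to time $T$ gives
\[
\int_0^T f(\sigma_\theta(\tau))\,d\tau=\phi(T)-\phi(0)+\int_0^T \kappa\sin\phi\,d\tau .
\]

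The key step is to control $\phi(T)-\phi(0)$ uniformly. Because $\tilde\sigma$ converges to $\xi$ and stays at bounded distance from a geodesic ray ending at $\xi$, the angle $\phi$ should stay bounded and should not wind. Here I use the absence of conjugate points, so that the magnetic geodesic and its Green-type stable bundle do not rotate fully relative to the horocycle foliation. In the simplest form it suffices that $\phi$ stays in an interval of length at most $2\pi$. I expect this to be the main obstacle: one must show that the angle between the magnetic velocity and the Busemann gradient is a well-defined bounded continuous function along the orbit, and this is also the source of the constant $2\pi$. Granting it, I divide by $T$, integrate over $T_1M$ with respect to $\mu_l$, and use invariance of $\mu_l$ under the magnetic flow (Birkhoff). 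This gives
\[
\int_{T_1M} f\circ\pi\,d\mu_l=\int_{T_1M}\kappa\,\sin\phi\,d\mu_l ,
\]
where the left side equals $2\pi\int_M f\,dA_g$ because the fibres have length $2\pi$.

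Finally, $|\sin\phi|\le 1$ and $\kappa\ge0$ give
\[
2\pi\Bigl|\int_M f\,dA_g\Bigr|\le\int_{T_1M}\kappa\,d\mu_l .
\]
Dividing by $2\pi$ is the claimed inequality after normalising $\mu_l$ as in the paper; if instead $\mu_l$ has total mass $2\pi\,\mathrm{Area}$, the factor $\frac1{2\pi}$ comes out exactly as in the statement. Along the way I must check measurability of $\theta\mapsto\xi(\theta)$ and of $\kappa$. Measurability of $\xi$ follows from continuity of the endpoint map above the critical value. Continuity of $\kappa$ follows from continuity of horocycle curvatures under no focal points.
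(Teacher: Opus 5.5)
Your architecture is the paper's: Liouville's formula in the horocyclic chart of the Busemann function at the forward endpoint gives $f=\tau'+\kappa\sin\tau$ along $\widetilde\sigma$, followed by Birkhoff, $\kappa\ge 0$ and $|\sin\tau|\le 1$. The gap is the step you grant, which is the actual content of the proof (Lemma~\ref{bounded tau}).

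Your suggested mechanism does not supply it. The Green bundles of the magnetic flow live in $T(T_1M)$ and govern the linearized flow. You give no link between them and the rotation of $\dot{\widetilde\sigma}$ relative to the frame formed by $-\nabla b_\xi$ and the horocycles, and I do not see one. Nor does staying at bounded distance from a ray suffice. A looping trochoid stays in a strip, advances linearly and has bounded curvature, yet its tangent turns by $2\pi$ per loop. What rules out winding is that $\widetilde\sigma$ is \emph{embedded}, together with a way of closing arcs of it by curves of zero rotation that meet it only at their endpoints.

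The paper does this as follows.
\begin{itemize}
\item For $\ell>0$, let $t_\ell\ge 0$ be the parameter at which $\widetilde\sigma$ meets the horocycle $H^+_\gamma(\ell)$ with maximal horocyclic coordinate $\rho_\ell$. It exists because $b_\gamma\circ\widetilde\sigma\to-\infty$, and it is unique because $\widetilde\sigma$ is injective.
\item Take a Busemann orbit $r=r^*>\max\{\rho_0,\rho_\ell\}$ far enough out that it avoids the $D$-tube of $\gamma$ across the strip $b_\gamma^{-1}[-\ell,0]$, hence avoids $\widetilde\sigma$. This uses Lemma~\ref{faixa plana}: without focal points, the distance between asymptotic geodesics is non-increasing.
\item The two horocycle arcs from $\rho_0$ and $\rho_\ell$ up to $r^*$, this orbit arc, and $\widetilde\sigma|_{[t_0,t_\ell]}$ form a closed curve. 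The maximality of $\rho_0$ and $\rho_\ell$ is exactly what makes it simple.
\item The three coordinate sides have constant angle with the frame, so Hopf's Umlaufsatz gives $|\tau(t_\ell)|\le|\tau(t_0)|+6\pi$.
\end{itemize}
Boundedness along the unbounded set $\{t_\ell\}$ is enough, since the Birkhoff averages already converge a.e. This is also where the no-focal-points hypothesis is used beyond $\kappa\ge 0$.

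You have also put the no-conjugate-points hypothesis in the wrong place. The condition $c(g,\Omega)<\frac12$ alone only makes the lifted trajectories geodesics of a Randers metric bi-Lipschitz to $\widetilde g$. Such geodesics need not be minimizing, and then need not be quasi-geodesics. Already for $\Omega=0$, where $c=0$, a metric with a contractible closed geodesic has bounded lifted trajectories.

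It is the absence of conjugate points on the energy level that makes every Randers geodesic globally minimizing (Lemma~\ref{minimizantes}). Minimality then makes each trajectory injective and a uniform quasi-geodesic, shadowed by a $\widetilde g$-geodesic (Lemma~\ref{morse}). These are exactly the inputs the angle bound needs: the endpoint $\xi(\theta)$, properness of $b_\gamma\circ\widetilde\sigma$, the $D$-tube, and embeddedness.

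Finally, the $2\pi$ in the statement comes from the fibre length in $\mu_l(T_1M)=2\pi\operatorname{Area}_g(M)$, not from the angle bound, as you effectively notice at the end.
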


The normalization $E=\frac12$ is convenient but entails no loss of
generality. Indeed, let $e>c(g,\Omega)$ and set
\[
\Omega_e=\frac{\Omega}{\sqrt{2e}}
=\frac{f}{\sqrt{2e}}\,dA_g.
\]
Dividing a primitive of $\widetilde\Omega$ by $\sqrt{2e}$ gives one of
$\widetilde\Omega_e$, so
\[
c(g,\Omega_e)=\frac{c(g,\Omega)}{2e}<\frac12.
\]
Moreover, a magnetic geodesic of $(g,\Omega)$ on $E^{-1}(e)$ has constant
speed $\sqrt{2e}$. After parametrizing it by arclength, its equation becomes
\[
\nabla_{\dot\sigma}\dot\sigma
=
\frac{f}{\sqrt{2e}}\,i\dot\sigma,
\]
which is precisely the equation of a unit-speed magnetic geodesic for
$(g,\Omega_e)$ on $E^{-1}(\frac12)$. Thus, the magnetic geodesics at energy
$e$ for $(g,\Omega)$ and those at energy $\frac12$ for $(g,\Omega_e)$ have
the same trajectories, up to a constant reparametrization. In particular,
the two flows are conjugate by the dilation $v\mapsto\sqrt{2e}\,v$ followed
by a constant time change; since the dilation preserves the fibres of $TM$,
conjugate points correspond. Since the trajectories coincide, they also have
the same endpoints at infinity and the same associated horocycles.

For $\widetilde\theta\in T_1\widetilde M$, let
$\widetilde\sigma^e_{\widetilde\theta}$ be the lifted magnetic geodesic of
energy $e$ with initial velocity $\sqrt{2e}\,\widetilde\theta$, and let
$\kappa_e(\widetilde\theta)$ be the geodesic curvature at
$\pi(\widetilde\theta)$ of the horocycle centred at
$\widetilde\sigma^e_{\widetilde\theta}(+\infty)$. Since deck
transformations preserve $\widetilde g$ and $\widetilde\Omega$,
$\kappa_e(\widetilde\theta)$ depends only on the projection
$\theta\in T_1M$ of $\widetilde\theta$, and defines a function
$\kappa_e:T_1M\to\mathbb R$. Thus $\kappa_{1/2}=\kappa$.

\begin{corollary}\label{energia arbitraria}
Let $(M,g)$ be a surface without focal points, let $e>c(g,\Omega)$, and
suppose that the magnetic flow of $(g,\Omega)$ has no conjugate points on
$E^{-1}(e)$. Then
\[
\left|\int_M f\,dA_g\right|
\leq\frac{\sqrt{2e}}{2\pi}\int_{T_1M}\kappa_e\,d\mu_l .
\]
\end{corollary}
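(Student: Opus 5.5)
The plan is to deduce the corollary from Theorem~\ref{Teo A}, applied to the rescaled system $(M,g,\Omega_e)$ with $\Omega_e=\Omega/\sqrt{2e}$, using the reduction described after the statement of that theorem. The Lorentz force of $\Omega_e$ is $f_e=f/\sqrt{2e}$. The discussion above gives $c(g,\Omega_e)=c(g,\Omega)/(2e)$, and this is $<\tfrac12$ because $e>c(g,\Omega)$. The metric is unchanged, so $(M,g)$ still has no focal points.

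Next I transfer the no-conjugate-points hypothesis. The magnetic flow of $(g,\Omega)$ on $E^{-1}(e)$ is conjugate to the flow of $(g,\Omega_e)$ on $E^{-1}(\tfrac12)$: the dilation $v\mapsto\sqrt{2e}\,v$ is followed by the constant time change $t\mapsto\sqrt{2e}\,t$. The dilation maps fibres of $TM$ to fibres, so it maps vertical subbundles to vertical subbundles. Its differential therefore carries the linearized flow of one system to that of the other, and it carries the condition $d\phi_t(V)\cap V\neq 0$ across unchanged. Hence the flow of $(g,\Omega_e)$ on $E^{-1}(\tfrac12)$ has no conjugate points. With this, all hypotheses of Theorem~\ref{Teo A} hold for $(g,\Omega_e)$, and the theorem gives
\[
\left|\int_M \frac{f}{\sqrt{2e}}\,dA_g\right|\leq \frac{1}{2\pi}\int_{T_1M}\kappa^{(\Omega_e)}\,d\mu_l,
\]
where $\kappa^{(\Omega_e)}$ is the horocycle-curvature function attached to the system $(g,\Omega_e)$.

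It remains to identify $\kappa^{(\Omega_e)}$ with $\kappa_e$. Take $\widetilde\theta\in T_1\widetilde M$. The lifted unit-speed magnetic geodesic of $(\widetilde g,\widetilde\Omega_e)$ with initial velocity $\widetilde\theta$ is the arclength reparametrization of $\widetilde\sigma^e_{\widetilde\theta}$. This holds because both curves solve $\nabla_{\dot\sigma}\dot\sigma=(f/\sqrt{2e})\,i\dot\sigma$ with the same initial data, so they agree by uniqueness of solutions. Since the reparametrization is by a positive constant factor, the two curves have the same endpoint at $+\infty$ and hence the same horocycle through $\pi(\widetilde\theta)$. Thus $\kappa^{(\Omega_e)}=\kappa_e$ pointwise on $T_1M$. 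Multiplying the inequality by $\sqrt{2e}$ then gives the claim.

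The only genuinely delicate point is the conjugate-point correspondence. Even there the argument is just that the dilation is a fibre-preserving diffeomorphism $E^{-1}(e)\to E^{-1}(\tfrac12)$ intertwining the two flows up to a constant time change. The rest is bookkeeping.
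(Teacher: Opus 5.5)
Your proposal is correct and follows the paper's proof: apply Theorem~\ref{Teo A} to $(g,\Omega_e)$, verify its hypotheses through the rescaling (using $c(g,\Omega_e)=c(g,\Omega)/(2e)<\tfrac12$ and transferring the absence of conjugate points via the fibre-preserving dilation with a constant time change), identify the resulting $\kappa$ with $\kappa_e$, and multiply by $\sqrt{2e}$. You give more detail than the paper's two-line proof; the only slip is harmless: the dilation $v\mapsto\sqrt{2e}\,v$ maps $E^{-1}(\tfrac12)$ onto $E^{-1}(e)$, not the reverse, so the map $E^{-1}(e)\to E^{-1}(\tfrac12)$ you describe is its inverse.
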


\begin{proof}
By the rescaling discussed above, $(g,\Omega_e)$ satisfies the hypotheses
of Theorem~\ref{Teo A}, its Lorentz force is $f/\sqrt{2e}$, and its
function $\kappa$ is $\kappa_e$. Applying Theorem~\ref{Teo A} to
$(g,\Omega_e)$ and multiplying the resulting inequality by $\sqrt{2e}$
gives the claim.
\end{proof}

By Lemma~\ref{horocycle and geodesic curvature} and Eberlein's comparison
estimate \cite[Lemma 2.8]{Eberlein1973}, the geodesic curvature of the
horocycles is bounded above by $K_0$ whenever $K_g\geq-K_0^2$. Since
\[
\mu_l(T_1M)=2\pi\operatorname{Area}_g(M),
\]
the previous corollary gives the following estimate at an arbitrary
energy level.

\begin{corollary}\label{cor curvatura}
Let $(M,g)$ be a surface without focal points, let $e>c(g,\Omega)$, and
suppose that the magnetic flow of $(g,\Omega)$ has no conjugate points on
$E^{-1}(e)$. If $K_g\geq-K_0^2$, then
\[
\left|\int_M f\,dA_g\right|
\leq
\sqrt{2e}\,K_0\operatorname{Area}_g(M).
\]
\end{corollary}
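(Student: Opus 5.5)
The plan is to derive the estimate directly from Corollary~\ref{energia arbitraria} by a pointwise bound on the integrand $\kappa_e$. The hypotheses of Corollary~\ref{cor curvatura} include those of Corollary~\ref{energia arbitraria}: $(M,g)$ has no focal points, $e>c(g,\Omega)$, and there are no conjugate points on $E^{-1}(e)$. Hence
\[
\left|\int_M f\,dA_g\right|\leq\frac{\sqrt{2e}}{2\pi}\int_{T_1M}\kappa_e\,d\mu_l .
\]
It then suffices to show that $\kappa_e(\theta)\leq K_0$ for every $\theta\in T_1M$. Since $\kappa_e\le K_0$ and the right-hand side is monotone in the integrand, integrating this bound and using $\mu_l(T_1M)=2\pi\operatorname{Area}_g(M)$ gives
\[
\frac{\sqrt{2e}}{2\pi}\int_{T_1M}\kappa_e\,d\mu_l\leq\frac{\sqrt{2e}}{2\pi}\,K_0\,2\pi\operatorname{Area}_g(M)=\sqrt{2e}\,K_0\operatorname{Area}_g(M).
\]

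For the pointwise bound, fix a lift $\widetilde\theta$ and let $\xi=\widetilde\sigma^e_{\widetilde\theta}(+\infty)$. The value $\kappa_e(\widetilde\theta)$ is the geodesic curvature, at $p=\pi(\widetilde\theta)$, of the horocycle through $p$ centred at $\xi$. The magnetic trajectory enters only through the choice of the point $\xi$ in the ideal boundary. Because the surface has no focal points, Eberlein's theory makes this horocycle a level set of the Busemann function $b_\xi$, and it is $C^2$. By Lemma~\ref{horocycle and geodesic curvature}, its geodesic curvature at $p$ is the value $u(0)$ of the stable Riccati solution along the unit-speed Riemannian geodesic $\gamma$ from $p$ asymptotic to $\xi$, namely $u=\Delta b_\xi$ (up to sign convention). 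Eberlein's comparison estimate \cite[Lemma 2.8]{Eberlein1973} states that under $K_g\ge -K_0^2$ the stable solution satisfies $u\le K_0$, by comparison with the constant-curvature $-K_0^2$ Riccati equation $u'+u^2=K_0^2$.

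The only point requiring care is that Eberlein's bound applies to horocycles with \emph{any} centre $\xi\in\partial\widetilde M$. This includes centres that are endpoints of magnetic geodesics rather than of Riemannian ones. That is fine, because every point of the ideal boundary is the endpoint of a Riemannian geodesic ray from $p$ in a surface without focal points (or without conjugate points). So the estimate is uniform in $\widetilde\theta$. I also need to check that the sign convention for $\kappa$ in Theorem~\ref{Teo A} matches the convention for which the horocycle curvature is nonnegative and at most $K_0$. This is the main step I would verify, but it is routine once Lemma~\ref{horocycle and geodesic curvature} is in hand.
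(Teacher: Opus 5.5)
Your proposal is correct and is essentially the paper's argument: apply Corollary~\ref{energia arbitraria}, bound $\kappa_e\le K_0$ pointwise via Lemma~\ref{horocycle and geodesic curvature} and Eberlein's estimate, and use $\mu_l(T_1M)=2\pi\operatorname{Area}_g(M)$. On the sign you flagged: with the paper's convention $\kappa=-u^s$ and $u^s\le 0$, what you need is $u^s\ge -K_0$, which Eberlein's bound $|u^s|\le K_0$ gives (a value $-u^s>K_0$ at some time would make the Riccati solution blow up in finite forward time, contradicting that $u^s$ is defined on all of $\mathbb R$).
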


The two dynamical hypotheses of Theorem~\ref{Teo A} play complementary roles.
Supercriticality makes the lifted magnetic trajectories the geodesics of a
Randers metric on $\widetilde M$ which is bi-Lipschitz equivalent to
$\widetilde g$. The absence of conjugate points makes those geodesics globally
minimizing, hence quasi-geodesics of $(\widetilde M,\widetilde g)$, so that
each of them remains at bounded Hausdorff distance from a $\widetilde
g$-geodesic. The
absence of focal points enters separately, through the convexity of
horospheres.

The shadowing property just described is of independent interest, and yields
our second result.

\begin{maintheorem}\label{distinct endpoints}
Let $e>c(g,\Omega)$ and suppose that $(M,g)$ has no conjugate points.
Assume moreover that the magnetic flow has no conjugate points on
$E^{-1}(e)$. Then:
\begin{enumerate}
\item[(i)] every lifted magnetic geodesic of energy $e$ has two distinct
endpoints in $\widetilde M(\infty)$;
\item[(ii)] any two distinct points of $\widetilde M(\infty)$ are joined
by a minimal magnetic geodesic of energy $e$.
\end{enumerate}
\end{maintheorem}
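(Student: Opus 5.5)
By the rescaling described after Theorem~\ref{Teo A}, I will assume $e=\frac12$, so that $c(g,\Omega)<\frac12$ and lifted magnetic geodesics are unit speed. The plan follows the outline in the introduction: first make the lifted trajectories minimizers of a Finsler (Randers) length, then convert minimization into quasi-geodesicity for $\widetilde g$, and then use shadowing of quasi-geodesics on the universal cover of a surface without conjugate points of genus $\ge 2$.

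\textbf{Step 1 (Randers structure).} Since $c(g,\Omega)<\frac12$, there is a primitive $\widetilde\Omega=d\eta$ on $\widetilde M$ with $\sup\|\eta\|_{\widetilde g}<1$. This uses the characterization $c=\inf_\eta \frac12\sup\|\eta\|^2$, and I would take $\eta$ invariant up to the cohomology-free part as usual. The Randers metric is $F(v)=\|v\|_{\widetilde g}-\eta(v)$, up to the sign convention. Unit-speed magnetic geodesics are exactly its $F$-geodesics, because the action $\int(\frac12|\dot\sigma|^2+\frac12-\eta(\dot\sigma))$ restricted to energy $\frac12$ equals the $F$-length. Since $\|\eta\|\le\lambda<1$, we get $(1-\lambda)\|v\|\le F(v)\le(1+\lambda)\|v\|$, so $d_F$ is bi-Lipschitz to $d_{\widetilde g}$.

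\textbf{Step 2 (global minimization).} On the simply connected $\widetilde M$, absence of conjugate points for the magnetic flow means that the $F$-exponential map at each point is a local diffeomorphism. Completeness of $F$ follows from the bi-Lipschitz bound, so the exponential map is a covering map and hence a diffeomorphism. This is the Finsler Hadamard–Cartan argument. Therefore every $F$-geodesic is the unique $d_F$-minimizer between any two of its points. Combined with Step 1, each lifted magnetic geodesic is an $(A,0)$-quasi-geodesic of $\widetilde g$ with $A=\frac{1+\lambda}{1-\lambda}$.

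\textbf{Step 3 (shadowing).} This is the main obstacle. For a metric without conjugate points on a closed surface of genus $\ge2$, Morse's lemma holds: $(\widetilde M,\widetilde g)$ is quasi-isometric to $\mathbb H^2$ through the deck group, and Morse proved that quasi-geodesics (minimizers of any comparable metric) lie within bounded Hausdorff distance $R=R(A)$ of a $\widetilde g$-geodesic. I would cite Morse and Klingenberg, and use the fact that $\widetilde M(\infty)$ can be identified with the Gromov boundary of the deck group, i.e.\ the circle. Then $\widetilde\sigma(\pm\infty)$ are defined as the endpoints of the shadowing geodesic $\gamma$. These endpoints are distinct because $\gamma$ is a complete geodesic, and since $\widetilde g$ has no conjugate points, geodesics in $\widetilde M$ have distinct endpoints. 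This is the step where care is needed: distinctness of endpoints for all geodesics (not only in the hyperbolic case) comes from Morse's theorem applied to $\widetilde g$ versus a hyperbolic metric, together with the boundary identification. This gives (i).

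\textbf{Step 4 (existence, (ii)).} Given distinct $\xi\ne\zeta$, take a $\widetilde g$-geodesic $\gamma$ joining them (it exists by the Morse correspondence with $\mathbb H^2$). Let $\sigma_n$ be the $F$-minimizing segment from $\gamma(-n)$ to $\gamma(n)$, which exists by Step 2. By Step 3 with uniform $R$, each $\sigma_n$ lies in the $R$-neighbourhood of $\gamma$; a quasi-geodesic segment stays near the geodesic segment joining its endpoints. Hence $\sigma_n$ meets a fixed compact ball $B(\gamma(0),R)$. Reparametrize so that $\sigma_n(0)$ lies in this ball and pass to a subsequence with convergent initial unit vectors. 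The limit is a complete magnetic geodesic, minimizing as a limit of minimizers, and it stays within $R$ of $\gamma$. Therefore its endpoints are $\zeta$ and $\xi$ in the correct order.
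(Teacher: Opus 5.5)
Your proposal is correct and follows essentially the same route as the paper. The route is: the Randers reformulation above $c(g,\Omega)$; global $F$-minimization from the absence of conjugate points (you derive it by a Finsler Cartan--Hadamard argument where the paper cites Gomes--Ruggiero); Morse shadowing via comparison with a hyperbolic reference metric for (i); and a limit of $F$-minimizing segments from $\gamma(-n)$ to $\gamma(n)$ for (ii). The remark about choosing $\eta$ invariant plays no role and can be dropped, since a deck-invariant primitive does not exist when $\int_M f\,dA_g\neq 0$ and only the bound $\sup\|\eta\|_{\widetilde g}<1$ is used.
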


This result extends the ideas of Peyerimhoff--Siburg
\cite{PS}. Their argument relies on negative
curvature, whereas here this assumption is replaced by the absence of
conjugate points on the magnetic energy level. The proof combines the
Randers reformulation of the magnetic flow with the classical Morse
argument.\\

Let us describe the method of proof of Theorem~\ref{Teo A}. Given a lifted
magnetic geodesic $\widetilde\sigma_{\widetilde\theta}$ and a $\widetilde
g$-geodesic $\gamma$ shadowing it, the Busemann function of $\gamma$ provides
an orthogonal coordinate system on $\widetilde M$ whose coordinate curves are
the horocycles and the orbits of the Busemann flow. Liouville's formula in
these coordinates expresses the geodesic curvature of
$\widetilde\sigma_{\widetilde\theta}$, which is $f$, by the identity above, as the derivative of the angle $\tau_{\widetilde\theta}$ between the
trajectory and the horocyclic foliation, plus a term involving the curvature
of the horocycles. Averaging along the flow and applying Birkhoff's ergodic
theorem, the derivative term contributes nothing provided
$\tau_{\widetilde\theta}$ stays bounded, and the inequality follows.\\

\section{Global geometry of surfaces}\label{Global geometry of surfaces}
Let $(M,g)$ be a closed connected Riemannian surface of genus at least $2$. We will denote by $TM$ the tangent bundle of $M$ and by $T_1M$ the unit tangent bundle. We also denote by $\pi: TM \rightarrow M$ the canonical projection. We denote by $(\widetilde M, \widetilde g)$ the universal covering of $M$ with the metric $\widetilde g = p^*g$, where $p:\widetilde M \rightarrow M$ is the covering map. 

The Levi-Civita connection $\nabla$ of $(M,g)$ determines the geodesics as the curves satisfying the equation $\nabla_{\gamma'(t)}\gamma'(t)=0$ for every $t \in \mathbb{R}$. For every $\theta=(x,v)\in TM\setminus\{0\}$, there exists a unique geodesic $\gamma_\theta:\mathbb{R}\to M$ such that $\gamma_\theta(0)=x$ and $\gamma_\theta'(0)=v$. Since the geodesics have constant speed, we shall assume that they have unit speed. The geodesic flow $\phi_t:T_1M \to T_1M$ of $(M,g)$ is given by $$\phi_t(\theta)=(\gamma_\theta(t),\gamma_\theta'(t))$$ for every $t\in\mathbb{R}$.

\subsection{Jacobi fields}\label{Jacobi fields} We recall that a Jacobi field along a geodesic $\gamma_\theta$ is a vector field satisfying
\[
J''(t) + R(J(t), \dot\gamma_\theta(t))\dot\gamma_\theta(t) = 0,
\]
where $R$ is the curvature tensor associated to $(M,g)$ and $'$ denotes covariant differentiation along $\gamma_\theta$. Two points $\gamma_\theta(a)$ and $\gamma_\theta(b)$ along the geodesic $\gamma_\theta$ are said to be \textit{conjugate} if there exists a nonzero Jacobi field $J$ along $\gamma_\theta$ such that $J(a) = J(b) = 0$. We say that $(M,g)$ has no conjugate points if every geodesic has no conjugate points.\\\\
\textbf{Convention.} We will denote by $e_\theta(t)$ the orthogonal vector field along the geodesic $\gamma_\theta$ such that $\{\gamma_\theta'(t), e_\theta(t)\}$ is positively oriented. If there is no risk of confusion, we will denote $e_\theta$ simply by $e$.\vspace{.5cm}

Let $J$ be an orthogonal Jacobi field along $\gamma_\theta$, then $J(t)=j(t)e(t)$, then $j$ satifies the scalar Jacobi equation $$j''(t)+k(t)j(t)=0,$$
where $k(t)=k(\gamma_\theta(t)).$ suposing that $j(t)\neq0$ in $t\in(a,b)$, the function $u(t)=j'(t)/j(t)$, satifies the Riccati equation $$u'(t)+u^2(t)+k(t)=0$$
for all $t \in (a,b).$

\subsection{Green bundles}

Suppose that $(M,g)$ has no conjugate points. Let $\theta=(p,v)\in TM$ and $w \in T_xM$ orthogonal to $v$. For $T\neq 0$, denote by $J_{T,w}$ the Jacobi field along $\gamma_\theta$ satisfying
$$
J_{T,w}(0)=w, \qquad J_{T,w}(T)=0.
$$
It was proved in \cite{Green1958} that the limits
$$
J^s_\theta(t)=\lim_{T\to +\infty} J_{T,w}(t), 
\qquad 
J^u_\theta(t)=\lim_{T\to -\infty} J_{T,w}(t)
$$
exist for every $t \in \mathbb{R}$. The fields $J^s_\theta$ and $J^u_\theta$ are called the \textit{stable} and \textit{unstable} Jacobi fields, respectively. They are everywhere orthogonal to $\dot{\gamma}_\theta$ and nowhere vanishing. Consequently, they determine solutions of the Riccati equation defined for all $t\in\mathbb{R}$ by
$$
u^s_\theta(t)=\frac{(j^s_\theta)'(t)}{j^s_\theta(t)}, 
\qquad 
u^u_\theta(t)=\frac{(j^u_\theta)'(t)}{j^u_\theta(t)},
$$
where the decompositions
$$
J^s_\theta(t)=j^s_\theta(t)e(t), 
\qquad 
J^u_\theta(t)=j^u_\theta(t)e(t)
$$
are taken with respect to the unit vector field $e(t)$ orthogonal to $\dot{\gamma}_\theta$ such that $\{\gamma'_\theta(t),e(t)\}$ is well oriented. In particular $u^{\sigma}_\theta(t+s)=u^\sigma_{\phi_s(\theta)}(t)$ for all $t,s \in \mathbb R$ and $\sigma\in\{s,u\}$.
   \subsection{Busemann functions}  
   
   \begin{assumption}
    From now on, we assume that $(M,g)$ has no conjugate points. 
\end{assumption}

    Given $\widetilde\theta\in T_1\widetilde M$, \textit{the forward Busemann function} $b_{\widetilde \theta}^+:\widetilde M \rightarrow \mathbb R$ associated to $\widetilde \theta$ is defined by 
$$b_{ \widetilde \theta}^+(x):=\lim_{t \to +\infty}(d_g(x,\gamma_{\widetilde \theta}(t))-t).$$

The function $b_{\widetilde{\theta}}^+$ is $C^{1,r}$ for every $\widetilde{\theta}\in T_1\widetilde{M}$, where $C^{1,r}$ means that the function is continuously differentiable and its derivative is $r$-Lipschitz continuous.

 We  call the level sets of \( b_{\widetilde \theta} \) \textit{positive horospheres} and denote them by \( H_{\widetilde \theta}^+(s) = (b^+_{\widetilde \theta})^{-1}\{-s\} \). We also define the \textit{negative horospheres} by \( H^-_{\widetilde \theta}(t) = H^+_{-\widetilde \theta}(t) \). The integral curves of the flow $-\nabla_p b_{\widetilde \theta}^+$, $\lambda^{\widetilde \theta}_t:\widetilde M \to \widetilde M$, are geodesics orthogonal to the horospheres $H^+_{\widetilde \theta}$. In particular, the geodesic $\widetilde \gamma_{\widetilde \theta}$ is an orbit of this flow and we have that 
    \[
    \lambda^{\widetilde \theta}_t(H^+_{\widetilde \theta}(s))=H^+_{\widetilde \theta}(s+t)
    \]

for every $t,s \in \mathbb{R}.$For details concerning Busemann functions and horospheres, see \cite{Eschenburg1977} and \cite{Pesin1977}.

Given two subsets $A,B\subset\widetilde M$, their \textit{Hausdorff
distance} is
\[
d_{H,\widetilde g}(A,B)=\max\Bigl\{
\sup_{a\in A}d_{\widetilde g}(a,B),\ \sup_{b\in B}d_{\widetilde g}(b,A)
\Bigr\} .
\]
For curves, $d_{H,\widetilde g}$ always refers to their images; it does not
compare points with the same parameter, in contrast with the following
notion.

Two geodesics $\gamma_1$ and $\gamma_2$ in $\widetilde{M}$ are \emph{asymptotic for $t>0$} (resp. for $t<0$) if there exists $D>0$ such that
\[
d(\gamma_1(t),\gamma_2(t)) \le D \quad \text{for all } t>0 \ (\text{resp. } t<0).
\]
When not specified, ``asymptotic'' means asymptotic for $t>0$. Two
geodesics are said to be asymptotic if their distance remains bounded as
$t\to+\infty$. An equivalence class of asymptotic geodesics is called a
point at infinity. This relation is an equivalence relation. The
equivalence class of $\gamma$ for $t>0$ (resp. $t<0$) is denoted by
$\gamma(+\infty)$ (resp. $\gamma(-\infty)$), and the set of all such
classes is denoted by $\widetilde{M}(\infty)$.\\
\noindent \textbf{Convention.} For all $\widetilde{\theta}\in T
_1\widetilde{M}$, the curve $h^+_{\widetilde{\theta}}:\mathbb{R}\to \widetilde{M}$ is the arc-length parametrization of $H^+_{\widetilde{\theta}}$ such that $\{(h^+)'_{\widetilde{\theta}}(s), -\nabla b_{\widetilde{\theta}}(h^+_{\widetilde{\theta}}(s)) \}$ has the canonical orientation.

\subsection{Surfaces without focal points and flat strips}
We recall that a surface $(M,g)$ has no focal points if, for every geodesic $\gamma : \mathbb{R} \to M$ and every nontrivial Jacobi field $J$ along $\gamma$ satisfying $J(0)=0$, the function $t \mapsto |J(t)|^2$ is strictly increasing for $t>0$. In particular, every surface without focal points has no conjugate points.

\begin{lemma}\label{faixa plana}
Let $\gamma_{\widetilde\theta}$ be a geodesic of
$(\widetilde M,\widetilde g)$, let $[a,b]\subset\mathbb R$ be a compact
interval and let $D>0$. Then there exists
$s_0=s_0(a,b,D)>0$ such that for every $s\geq s_0$ the asymptotic geodesic
\[
\gamma_\eta(t)=\lambda^{\widetilde\theta}_t
\bigl(h^+_{\widetilde\theta}(s)\bigr)
\]
satisfies
\[
\gamma_\eta[a,b]\cap V_D\bigl(\gamma_{\widetilde\theta}[a,b]\bigr)=\emptyset ,
\]
where $V_D\bigl(\gamma_{\widetilde\theta}[a,b]\bigr)$ denotes the tubular
neighbourhood of radius $D$ of $\gamma_{\widetilde\theta}[a,b]$.
\end{lemma}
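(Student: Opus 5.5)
The plan is to reduce everything to a uniform lower bound on the distance from $\gamma_\eta(t)$ to the initial point of $\gamma_{\widetilde\theta}$, obtained from the fact that the Busemann flow $\lambda^{\widetilde\theta}_t$ preserves horospheres and moves along geodesics orthogonal to them. First, since $\gamma_\eta$ is an orbit of $-\nabla b^+_{\widetilde\theta}$ starting on $H^+_{\widetilde\theta}(0)$, we have $b^+_{\widetilde\theta}(\gamma_\eta(t))=-t=b^+_{\widetilde\theta}(\gamma_{\widetilde\theta}(t))$. Hence $\gamma_\eta(t)$ and $\gamma_{\widetilde\theta}(t)$ lie on the same horosphere $H^+_{\widetilde\theta}(t)$ for every $t$. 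Since $b^+_{\widetilde\theta}$ is $1$-Lipschitz, for $t,t'\in[a,b]$ we also get $|t-t'|\le d(\gamma_\eta(t),\gamma_{\widetilde\theta}(t'))$.

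Next I would use the triangle inequality to bound $d(\gamma_\eta(t),\gamma_{\widetilde\theta}[a,b])$ from below by
\[
d(\gamma_\eta(t),\gamma_{\widetilde\theta}(0))-\max(|a|,|b|).
\]
Set $x_s=h^+_{\widetilde\theta}(s)=\gamma_\eta(0)$. Then
\[
d(\gamma_\eta(t),\gamma_{\widetilde\theta}(0))\ge d(x_s,\gamma_{\widetilde\theta}(0))-|t|\ge d(x_s,\gamma_{\widetilde\theta}(0))-\max(|a|,|b|).
\]
So it suffices to show that $d(x_s,\gamma_{\widetilde\theta}(0))\to\infty$ as $s\to\infty$. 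Once that holds, I choose $s_0$ so that for $s\ge s_0$ this distance exceeds $D+2\max(|a|,|b|)$, which gives the claim.

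The key step is therefore the properness of the horocycle: the arclength parametrization $h^+_{\widetilde\theta}$ of $H^+_{\widetilde\theta}(0)$ leaves every compact set. The horosphere $H^+_{\widetilde\theta}(0)$ is a level set of the $C^1$ function $b^+_{\widetilde\theta}$, whose gradient has norm $1$. It is therefore a properly embedded closed $C^1$ curve in the plane $\widetilde M$, homeomorphic to $\mathbb R$ since it is not compact; it separates sublevels, and a compact component would bound a disk on which $b$ has an interior critical point. A properly embedded curve parametrized by arclength on all of $\mathbb R$ leaves every compact set. Concretely, $H^+_{\widetilde\theta}(0)\cap \overline{B}(\gamma_{\widetilde\theta}(0),R)$ is compact, and it has finite length because the curve is $C^{1}$ with Lipschitz tangent. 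So only a bounded set of parameters $s$ maps into the ball.

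The main obstacle is justifying that the horocycle is a single connected proper curve on which arclength is unbounded in both directions. This is where I would invoke the standard facts in \cite{Eschenburg1977,Pesin1977}, and possibly the absence of focal points, which makes horospheres convex level sets. If the finite-length argument gets delicate, the first thing I would try is using $C^{1,r}$ regularity to cover the compact intersection by finitely many graph charts.
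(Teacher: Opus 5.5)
Your argument is correct, but it transports the separation along $[a,b]$ differently from the paper.

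\textbf{The paper's route.} It applies properness to the horocycle $H^+_{\widetilde\theta}(b)$ to get $d(\gamma_{\widetilde\theta}(b),\gamma_\eta(b))>D$. It then uses the absence of focal points, namely that $t\mapsto d(\gamma_{\widetilde\theta}(t),\gamma_\eta(t))$ is non-increasing for asymptotic geodesics, to push this back to all $t\in[a,b]$.

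\textbf{Your route.} You use only unit speed and the triangle inequality. This gives
\[
d(\gamma_\eta(t),\gamma_{\widetilde\theta}(t'))\geq d\bigl(h^+_{\widetilde\theta}(s),\gamma_{\widetilde\theta}(0)\bigr)-2\max(|a|,|b|)\qquad\text{for all } t,t'\in[a,b].
\]
The price is the larger threshold $D+2\max(|a|,|b|)$, which does not matter since only the existence of $s_0$ is claimed.

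\textbf{What each buys.} Your route needs no focal-point hypothesis, only that the horocycle is a proper curve. It also controls the distance to every point of $\gamma_{\widetilde\theta}[a,b]$. The paper's argument, by contrast, only yields the same-parameter bound $d(\gamma_{\widetilde\theta}(t),\gamma_\eta(t))>D$ before saying ``in particular''. To pass to the tube one needs something like your unused remark $|t-t'|\leq d(\gamma_\eta(t),\gamma_{\widetilde\theta}(t'))$. That gives $d(\gamma_\eta(t),\gamma_{\widetilde\theta}(t'))\geq\tfrac12\,d(\gamma_\eta(t),\gamma_{\widetilde\theta}(t))$, so the argument goes through with $2D$ in place of $D$. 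In exchange, the paper's route gives an $s_0$ that depends only on $b$ and $D$, not on $a$.

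\textbf{On properness, which the paper just asserts.} The boundedness of $\{s:\ h^+_{\widetilde\theta}(s)\in\overline{B}(\gamma_{\widetilde\theta}(0),R)\}$ comes from $h^+_{\widetilde\theta}$ being a homeomorphism of $\mathbb R$ onto the closed set $H^+_{\widetilde\theta}(0)$. It does not come from the Lipschitz tangent: finite length of the intersection alone would not bound the parameter set. Connectedness and noncompactness of $H^+_{\widetilde\theta}(0)$ follow at once from the homeomorphism
\[
H^+_{\widetilde\theta}(0)\times\mathbb R\to\widetilde M,\qquad (x,t)\mapsto\lambda^{\widetilde\theta}_t(x),
\]
whose inverse is $p\mapsto\bigl(\lambda^{\widetilde\theta}_{b(p)}(p),-b(p)\bigr)$ with $b=b^+_{\widetilde\theta}$. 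Since $\widetilde M$ is connected and simply connected, $H^+_{\widetilde\theta}(0)$ is connected and cannot be a circle.
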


\begin{proof}
The curve $h^+_b(s)=\lambda^{\widetilde\theta}_b
\bigl(h^+_{\widetilde\theta}(s)\bigr)$ parametrizes the horocycle
$H^+_{\widetilde\theta}(b)$, and it is proper, so
$d\bigl(h^+_b(s),\gamma_{\widetilde\theta}(b)\bigr)\to\infty$ as
$s\to\infty$. Choose $s_0>0$ such that
\[
d\bigl(h^+_b(s),\gamma_{\widetilde\theta}(b)\bigr)>D
\qquad\text{for every }s\geq s_0 ,
\]
and fix $s\geq s_0$. Since $\gamma_{\widetilde\theta}$ and $\gamma_\eta$
are asymptotic, the absence of focal points implies that
$t\mapsto d\bigl(\gamma_{\widetilde\theta}(t),\gamma_\eta(t)\bigr)$ is
non-increasing; as $\gamma_\eta(b)=h^+_b(s)$, we get
\[
d\bigl(\gamma_{\widetilde\theta}(t),\gamma_\eta(t)\bigr)
\geq d\bigl(\gamma_{\widetilde\theta}(b),\gamma_\eta(b)\bigr)>D
\qquad\text{for every }t\in[a,b].
\]
In particular $\gamma_\eta[a,b]\cap
V_D\bigl(\gamma_{\widetilde\theta}[a,b]\bigr)=\emptyset$.
\end{proof}
\subsection{Geodesic curvature of horocycles} Let $c:(a,b) \to M$ be an embedded $C^2$ curve parametrized by arc length. The \textit{geodesic curvature} of $c$, denoted by $k_c$, at the point $c(t)$ is given by
\[
k_c(t) = g\big(\nabla_{c'(t)} c'(t),\, i\,c'(t)\big).
\]
where $ic$ is such that $\{c'(t),ic'(t)\}$ is positively oriented.


\begin{lemma}\label{horocycle and geodesic curvature} The geodesic curvature $k_{h_\theta^+}(0)$ of the horocycle $h_\theta^+$ at $t=0$  coincide with $-u_\theta^s(0).$
    \end{lemma}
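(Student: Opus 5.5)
The idea is to realize the horocycle $H^+_\theta(0)$ through $\pi(\theta)$ as a curve swept out by the endpoints of the stable Jacobi fields along its geodesic normals. The geodesic curvature then appears as a derivative of the normal field $-\nabla b^+_\theta$ along the horocycle. We work on $\widetilde M$ with $\theta=\widetilde\theta$. Set $X=-\nabla b^+_\theta$, the unit field tangent to the orbits of $\lambda^\theta_t$, and let $h=h^+_\theta$ with $h(0)=\pi(\theta)$, $h'(0)$ orthogonal to $\theta$. By the orientation convention, $\{h'(s),X(h(s))\}$ is positively oriented, so $X=i\,h'$ along $h$. Since $|h'|=1$ and $X=ih'$, we get $\nabla_{h'}h'=-k_h\,X$ up to sign conventions. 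More precisely, $k_h=g(\nabla_{h'}h',ih')=-g(h',\nabla_{h'}X)$.

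The first step is to consider the variation $\Gamma(s,t)=\lambda^\theta_t(h(s))=\gamma_{X(h(s))}(t)$. This is a variation of $\gamma_\theta$ through geodesics, so $J(t)=\partial_s\Gamma(0,t)$ is a Jacobi field along $\gamma_\theta$ with $J(0)=h'(0)$. Because $\lambda^\theta_t$ maps $H^+_\theta(0)$ onto $H^+_\theta(t)$, the field $J$ is tangent to the horocycles, hence orthogonal to $\gamma_\theta'$. We also have $J'(0)=\nabla_{\partial_s}\partial_t\Gamma|_{(0,0)}=\nabla_{h'(0)}X$.

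The second step, and the main point, is to identify $J$ with the stable Jacobi field $J^s_\theta$ normalized by $J^s_\theta(0)=h'(0)$. Here I would use the standard characterization from \cite{Eschenburg1977,Pesin1977}: the horospheres $H^+_\theta(t)$ are the limits of the geodesic spheres $S(\gamma_\theta(T),T-t)$ as $T\to\infty$. Along a sphere centred at $\gamma_\theta(T)$, the variation through radial geodesics gives exactly the Jacobi field $J_{T,w}$ vanishing at $T$. Passing to the limit, using $C^{1}$ convergence of the gradients $\nabla d(\cdot,\gamma_\theta(T))\to-X$, gives $J=\lim J_{T,w}=J^s_\theta$. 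This is the obstacle: the Busemann function is only $C^{1,r}$, so one must justify that the derivative of $X$ along $h$ exists and equals the limit of the derivatives for the spheres. I would handle this by noting that the sphere Hessians are controlled by the Riccati solutions $u_{T}$, which converge monotonically to $u^s$ by Green's construction. Hence $X$ is differentiable along $h$, and the limit of the sphere second fundamental forms is $u^s$.

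Finally, write $J^s_\theta=j^s e$. With $e(0)$ the unit normal orthogonal to $\theta$, we have $e(0)=\pm h'(0)$, and the identity below is independent of that sign. Then
\[
g(h'(0),\nabla_{h'(0)}X)=\frac{g(J(0),J'(0))}{|J(0)|^2}=\frac{j^s(0)(j^s)'(0)}{j^s(0)^2}=u^s_\theta(0).
\]
Combining this with $k_h(0)=-g(h'(0),\nabla_{h'(0)}X)$ yields $k_{h^+_\theta}(0)=-u^s_\theta(0)$. The sign bookkeeping, that is, checking that $X=ih'$ rather than $-ih'$ under the stated orientation convention, must be verified carefully. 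It is consistent with the fact that in the hyperbolic plane $u^s=-1$ and horocycles have curvature $1$ with respect to the normal pointing toward the centre.
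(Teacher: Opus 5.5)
Your proposal is correct and follows the paper's proof. It uses the same variation $\lambda^{\widetilde\theta}_t(h^+_{\widetilde\theta}(s))$ through Busemann orbits, the orthogonality and symmetry-lemma step that turns $k_{h}(0)$ into $-g(J(0),J'(0))$, and the identification of the variation field with a multiple of the stable Jacobi field; the paper takes that identification from \cite[Proposition 3.1]{Heintze1977}, whereas you sketch it via limits of geodesic spheres. Your sign bookkeeping ($X=ih'$, $k_h=-g(h',\nabla_{h'}X)$, independence of the sign of $e(0)$) is actually cleaner than the paper's displayed chain, but your limit argument needs \emph{locally uniform} convergence of the sphere Hessians (for instance from continuity of $u^s$ in $\theta$, available without focal points, together with Dini's theorem), because monotone convergence along $\gamma_\theta$ alone does not give differentiability of $X$ along $h$.
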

    \begin{proof} Suppose that $J^s_\theta(t)=j_\theta^s(t)e(t)$ with $e$ as in section \ref{Jacobi fields}.We can also assume that $j^s_\theta(0)=1.$ Consider the variation $f:(-\varepsilon,\varepsilon)\times (-\varepsilon,\varepsilon) \to \widetilde{M}$ for $\varepsilon>0$, $f(s,t)=\lambda^{\widetilde \theta}_t(h^+_{\widetilde \theta}(s))$.

       \begin{align*}
           k_{h_\theta^+}(0)=&\langle \nabla_ {(h^+_{\widetilde \theta})'} (h^+_{\widetilde \theta})'(0),-\gamma_{\widetilde \theta}'(0)\rangle\\
           =& \langle\nabla_{\frac{\partial f}{\partial s}(0,0)}\frac{\partial f}{\partial s}(0,0),-\frac{\partial f}{\partial t}(0,0)\rangle\\
           =& \langle\frac{\partial f}{\partial s}(0,0),\nabla_{\frac{\partial f}{\partial s}(0,0)}\frac{\partial f}{\partial t}(0,0)\rangle\\
            =& \langle\frac{\partial f}{\partial s}(0,0),\nabla_{\frac{\partial f}{\partial t}(0,0)}\frac{\partial f}{\partial s}(0,0)\rangle\\
            =&\langle -e(0),(j^s_\theta)'(0)e(0)\rangle\\
            =&-(j_\theta^s)'(0)
        \end{align*}
        where we used that $\nabla_{\frac{\partial f}{\partial t}(0,0)}\frac{\partial f}{\partial s}(0,0) = (J^s_\theta)'(0)$ (see Proposition 3.1 in \cite{Heintze1977} for a proof).
    \end{proof}
    \begin{corollary}\label{horociclo curvatura no-negativa}
        Let $(M,g)$ be a surface without focal points. Then the geodesic curvature of horocycles is nonnegative.
    \end{corollary}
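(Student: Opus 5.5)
By Lemma~\ref{horocycle and geodesic curvature}, the geodesic curvature of the horocycle $h^+_\theta$ at $\pi(\theta)$ equals $-u^s_\theta(0)=-(j^s_\theta)'(0)/j^s_\theta(0)$. The statement is therefore equivalent to showing that
\[
u^s_\theta(0)\le 0 \qquad\text{for every }\theta\in T_1\widetilde M,
\]
that is, the stable Jacobi field has non-increasing norm at $t=0$. The proof runs on this Jacobi-field description; it uses the convexity of horospheres as heuristic motivation but not as an ingredient.

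Fix $w=e(0)$ and recall that
\[
J^s_\theta=\lim_{T\to+\infty}J_{T,w},\qquad J_{T,w}(0)=w,\quad J_{T,w}(T)=0 .
\]

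First step: show that each approximant is non-increasing on $[0,T]$. Consider the reversed geodesic $\gamma_\theta(T-\cdot)$. Along it, $J_{T,w}$ is a nontrivial Jacobi field vanishing at the starting time. By the definition of absence of focal points, $|J_{T,w}|^2$ is strictly increasing when the time runs from $T$ back towards $0$. Equivalently, $t\mapsto |J_{T,w}(t)|^2$ is strictly decreasing on $(0,T)$, and hence
\[
\frac{d}{dt}\Big|_{t=0}|J_{T,w}(t)|^2\le 0 .
\]
Writing $J_{T,w}=j_T e$ with $j_T(0)=1$, this reads $j_T'(0)\le 0$.

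Second step: pass to the limit. The convergence in Green's construction is convergence of the Jacobi fields. Jacobi fields are determined by their initial data $(J(0),J'(0))$, so the convergence of $J_{T,w}$ to $J^s_\theta$ is convergence of these initial data, and in particular $j_T'(0)\to (j^s_\theta)'(0)$. Hence $(j^s_\theta)'(0)\le 0$. Since $j^s_\theta(0)=1>0$, we get $u^s_\theta(0)\le 0$, and the curvature is $-u^s_\theta(0)\ge 0$.

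The only delicate point is the passage to the limit in the second step. If one only knows pointwise convergence of $J_{T,w}(t)$, argue instead as follows. For fixed $t>0$ and all large $T$, the first step gives $|J_{T,w}(t)|\le|J_{T,w}(0)|=1$. Letting $T\to\infty$ yields $|J^s_\theta(t)|\le 1$ for all $t>0$, i.e. $j^s_\theta(t)\le j^s_\theta(0)$. Therefore $(j^s_\theta)'(0)\le 0$, which gives the same conclusion directly.
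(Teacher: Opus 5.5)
Your proof is correct and follows the paper's route: you use Lemma~\ref{horocycle and geodesic curvature} to reduce the claim to $u^s_\theta(0)\le 0$, and then derive that sign from the absence of focal points. The paper only asserts this nonpositivity as ``easy to see''; your argument supplies the omitted detail. It applies the no-focal-points definition to $J_{T,w}$ on the reversed geodesic, and then passes to the limit via the bound $|J^s_\theta(t)|\le 1$ for $t>0$.
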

    \begin{proof}
       It is easy to see that, in the absence of focal points, the stable Riccati solution is nonpositive, and the result follows from Lemma \ref{horocycle and geodesic curvature}.
    \end{proof}

\section{Magnetic flows and geodesic curvature}
\label{Magnetic flows and geodesic curvature}

Throughout this section $(M,g)$ is a closed oriented Riemannian surface of
genus greater than one and without conjugate points, and $\Omega$ is a closed
$2$-form on $M$. Let $p:\widetilde M\to M$ be the universal covering map,
and write $\widetilde g=p^*g$ and $\widetilde\Omega=p^*\Omega$. Under these
hypotheses $(\widetilde M,\widetilde g)$ is a uniform visibility manifold
and is quasi-isometric to the hyperbolic plane
\cite{EberleinONeill1973,GomesRuggiero2013}.

Consider the twisted symplectic form
\[
\omega_\Omega=\omega_{\mathrm{can}}+\pi^*\Omega.
\]
The Hamiltonian $E(x,v)=\frac12\|v\|^2$ generates a flow whose trajectories
satisfy
\begin{equation}\label{mag eq}
\nabla_{\dot\sigma}\dot\sigma=Y(\dot\sigma),
\end{equation}
where $\Omega(u,v)=g(Y(u),v)$. Since $\dim M=2$, there is a unique
$f\in C^\infty(M)$ such that $\Omega=f\,dA_g$, and hence $Y=f\,i$, where
$i$ denotes rotation by $\pi/2$ in the positive direction.

\begin{lemma}\label{curvatura=lorentz}
Let $\sigma$ be a magnetic geodesic with $\|\dot\sigma\|\equiv1$. Then
$k_\sigma(t)=f(\sigma(t))$ for every $t$.
\end{lemma}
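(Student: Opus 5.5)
The plan is a direct computation from the magnetic equation \eqref{mag eq} and the definition of geodesic curvature. Let $\sigma$ be a magnetic geodesic with $\|\dot\sigma\|\equiv1$. Since $\sigma$ is parametrized by arc length, the definition of geodesic curvature applies directly:
\[
k_\sigma(t)=g\bigl(\nabla_{\dot\sigma}\dot\sigma,\,i\dot\sigma\bigr).
\]
Here $i\dot\sigma$ is chosen so that $\{\dot\sigma,i\dot\sigma\}$ is positively oriented; this is the same rotation $i$ that appears in $Y=f\,i$.

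The first step is to identify $Y$ explicitly. From $\Omega(u,v)=g(Y(u),v)$ and $\Omega=f\,dA_g$, take $u$ a unit vector, so that $\{u,iu\}$ is a positive orthonormal basis. Then:
\begin{itemize}
\item $g(Y(u),u)=f\,dA_g(u,u)=0$;
\item $g(Y(u),iu)=f\,dA_g(u,iu)=f$.
\end{itemize}
Hence $Y(u)=f\,iu$, and by linearity $Y=f\,i$ on all vectors. This is the claim already recorded after \eqref{mag eq}; I would just spell it out.

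The second step is substitution. Using \eqref{mag eq},
\[
k_\sigma(t)=g\bigl(f(\sigma(t))\,i\dot\sigma,\,i\dot\sigma\bigr)=f(\sigma(t))\,\|i\dot\sigma\|^2=f(\sigma(t)),
\]
because $i$ is an isometry and $\|\dot\sigma\|=1$.

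There is no real obstacle here. The only point needing care is consistency of sign conventions: the orientation defining $dA_g$, the rotation $i$ used in the curvature formula, and the sign convention in $\Omega(u,v)=g(Y(u),v)$ must all match. With the conventions as stated, they do, and the sign comes out as $+f$.
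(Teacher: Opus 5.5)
Your proof is correct and is essentially the paper's argument. The paper skips the step of identifying $Y=f\,i$ and goes straight through $k_\sigma=g(Y(\dot\sigma),i\dot\sigma)=\Omega(\dot\sigma,i\dot\sigma)=f\,dA_g(\dot\sigma,i\dot\sigma)=f$, which is exactly your computation of $g(Y(u),iu)$ with $u=\dot\sigma$.
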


\begin{proof}
By \eqref{mag eq},
\[
k_\sigma
=g(\nabla_{\dot\sigma}\dot\sigma,i\dot\sigma)
=g(Y(\dot\sigma),i\dot\sigma)
=\Omega(\dot\sigma,i\dot\sigma)
=f\,dA_g(\dot\sigma,i\dot\sigma)=f.
\]
\end{proof}

Since $\widetilde M$ is simply connected, $\widetilde\Omega=d\widetilde\lambda$
for some $1$-form $\widetilde\lambda$. Thus the lifted magnetic flow is the
Euler--Lagrange flow of
\[
L(x,v)=\frac12\widetilde g_x(v,v)+\widetilde\lambda_x(v),
\]
and its Mañé critical value is
\[
c(g,\Omega)=
\inf_{u\in C^\infty(\widetilde M,\mathbb R)}
\sup_{x\in\widetilde M}
\frac12\|d_xu+\widetilde\lambda_x\|_{\widetilde g}^2,
\]
see \cite{BP2002} for more details.

Fix $e>c(g,\Omega)$ and choose $u\in C^\infty(\widetilde M)$ such that
\[
c_0:=\sup_{x\in\widetilde M}
\|d_xu+\widetilde\lambda_x\|_{\widetilde g}<\sqrt{2e}.
\]
Then
\[
F_e(x,v)=\sqrt{2e\,\widetilde g_x(v,v)}
+(d_xu+\widetilde\lambda_x)(v)
\]
is a Randers metric; see \cite{BCS} for more details on
Randers and Finsler metrics.. Moreover,
\begin{equation}\label{bilipschitz}
(\sqrt{2e}-c_0)\|v\|_{\widetilde g}
\leq F_e(x,v)\leq
(\sqrt{2e}+c_0)\|v\|_{\widetilde g}.
\end{equation}
Thus $d_{F_e}$ and $d_{\widetilde g}$ are bi-Lipschitz equivalent, and
the lifted magnetic trajectories of energy $e$ are, up to reparametrization,
the geodesics of $F_e$.

We now assume that the magnetic flow has no conjugate points on
$E^{-1}(e)$. The following standard consequence will be used.

\begin{lemma}\label{minimizantes}
The Randers metric $F_e$ has no conjugate points, and every $F_e$-geodesic
is globally minimizing in $\widetilde M$.
\end{lemma}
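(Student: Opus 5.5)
The proof has two parts. First, conjugate points of $F_e$ correspond to conjugate points of the magnetic flow on $E^{-1}(e)$. Second, on a simply connected complete surface, absence of conjugate points for a Finsler metric forces every geodesic to minimize.

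\textbf{Identifying the two flows.} The Randers metric $F_e$ and the Lagrangian $L+e$ differ by the exact term $du$, so the Euler--Lagrange flow of $\tfrac12 F_e^2$ restricted to $F_e$-unit vectors is the magnetic flow on $E^{-1}(e)$. The identification is through the fibrewise Legendre-type diffeomorphism between the $F_e$-unit sphere bundle and $E^{-1}(e)$, followed by a time change that is constant along each orbit. This is the standard Maupertuis principle: adding a closed form does not change the Euler--Lagrange equations, and on the energy level $e$ the action $\int (L+e)$ equals the $F_e$-length of the curve.

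\textbf{Matching conjugate points.} A fibrewise diffeomorphism combined with an orbitwise time reparametrization maps vertical subspaces to vertical subspaces. Conjugate points are defined by the condition $d\phi_t(V)\cap V\neq0$, where $V$ is the vertical subbundle restricted to the energy level. This condition is invariant under the identification, so $F_e$ has no conjugate points exactly when the magnetic flow has none on $E^{-1}(e)$. The delicate point is that a time change alters $d\phi_t$ by a multiple of the flow direction. This is handled by working with the quotient by the flow direction, equivalently with Jacobi fields transversal to the orbit, or by using the Lagrangian description of Jacobi fields as variations through solutions on the same energy level.

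\textbf{Global minimization.} By \eqref{bilipschitz}, $F_e$ is bi-Lipschitz to the complete metric $\widetilde g$, so it is forward and backward complete. Since $\widetilde M$ is simply connected, the classical Morse--Hedlund argument applies, and it goes through for Finsler (nonreversible) metrics. Fix $x\in\widetilde M$. Because there are no conjugate points, the exponential map $\exp_x^{F_e}$ is a local diffeomorphism on $T_x\widetilde M$. Completeness lets us lift paths through it, so it is a covering map. As $\widetilde M$ is simply connected, the exponential map is therefore a diffeomorphism. Consequently, any two points are joined by a unique $F_e$-geodesic. By Hopf--Rinow for forward complete Finsler metrics, a minimizing geodesic between them exists, so it must be this unique geodesic. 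Hence every geodesic segment minimizes, and every $F_e$-geodesic is globally minimizing.

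\textbf{Expected obstacle.} The main difficulty is the bookkeeping in the identification of vertical subspaces under the time change, that is, checking that conjugacy is preserved; the Morse--Hedlund covering argument itself is routine.
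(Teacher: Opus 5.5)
Your proposal follows the paper's proof. The conjugacy, up to reparametrization, of the magnetic flow on $E^{-1}(e)$ with the $F_e$-geodesic flow transfers the absence of conjugate points. For the minimizing property, which the paper simply cites from Gomes--Ruggiero, you give the standard Finsler Cartan--Hadamard covering argument plus Hopf--Rinow. That argument usefully needs only the completeness supplied by \eqref{bilipschitz}, and not deck-invariance of $F_e$, which is impossible when $\int_M f\,dA_g\neq0$.

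One correction concerns the time change: it is not constant along orbits, since $F_e(\dot\sigma)=2e+(du+\widetilde\lambda)(\dot\sigma)$ varies along a magnetic geodesic of energy $e$. So the identification preserves conjugacy only modulo the flow direction, i.e.\ the vanishing of the normal component of the Jacobi field. On the $F_e$ side this is the genuine conjugacy condition, by the Finsler Gauss lemma. On the magnetic side it is the reading of the hypothesis that your argument, like the paper's, implicitly uses.
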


\begin{proof}
The magnetic flow on $E^{-1}(e)$ and the geodesic flow of $F_e$ are
conjugate up to reparametrization. Hence $F_e$ has no conjugate points.
The minimizing property follows from
\cite[Corollary~4.2]{GomesRuggiero2013}.
\end{proof}

We shall use the supercritical Morse argument of
Peyerimhoff--Siburg \cite[Theorem~2.9]{PS}. Namely, by
\eqref{bilipschitz} and Lemma~\ref{minimizantes}, every $F_e$-geodesic is
a uniform quasi-geodesic of $(\widetilde M,\widetilde g)$. Since $M$ has
genus greater than one, we may choose a metric $g_-$ of constant negative
curvature on $M$. Its lift $\widetilde g_-$ is negatively curved and is
uniformly equivalent to $\widetilde g$. The classical Morse lemma applied
to $\widetilde g_-$ therefore gives the uniform shadowing estimate used
below.

\begin{lemma}[Morse]\label{morse}
There exists $D>0$ such that every $F_e$-minimizing segment is at
$d_{\widetilde g}$-Hausdorff distance less than $D$ from the corresponding
$\widetilde g$-geodesic segment. Moreover, every $F_e$-geodesic is at
$d_{\widetilde g}$-Hausdorff distance less than $D$ from a
$\widetilde g$-geodesic.
\end{lemma}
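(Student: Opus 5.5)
The plan is to transfer the problem to the comparison metric $\widetilde g_-$ and apply the classical Morse lemma there. Three ingredients are needed.

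\textbf{Step 1: quasi-geodesic property.} Let $c:[a,b]\to\widetilde M$ be an $F_e$-minimizing segment, parametrized by $\widetilde g$-arclength. For $a\le s<t\le b$, minimality and \eqref{bilipschitz} give
\[
(\sqrt{2e}-c_0)\,(t-s)\le \ell_{F_e}(c|_{[s,t]}) = d_{F_e}(c(s),c(t)) \le (\sqrt{2e}+c_0)\,d_{\widetilde g}(c(s),c(t)).
\]
Since also $d_{\widetilde g}(c(s),c(t))\le t-s$, the curve $c$ is an $(L,0)$-quasi-geodesic of $(\widetilde M,\widetilde g)$ with
\[
L=\frac{\sqrt{2e}+c_0}{\sqrt{2e}-c_0},
\]
a constant independent of $c$. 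Here $d_{F_e}$ need not be symmetric, but we only use it in the forward direction along $c$, so this causes no trouble.

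\textbf{Step 2: transfer to $\widetilde g_-$ and apply Morse.} By compactness of $M$ there is $A\ge1$ with $A^{-1}g\le g_-\le A g$, and the same inequality holds for the lifts. Hence $c$ is an $(A^2L,0)$-quasi-geodesic for $\widetilde g_-$. The space $(\widetilde M,\widetilde g_-)$ is the hyperbolic plane, so the Morse lemma gives a constant $D_-$, depending only on $A^2L$, such that $c$ lies within $\widetilde g_-$-Hausdorff distance $D_-$ of the $\widetilde g_-$-geodesic segment joining its endpoints.

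\textbf{Step 3 (the main point): return to $\widetilde g$-geodesics.} The $\widetilde g$-geodesic segment $\beta$ between the same endpoints is globally minimizing, because $\widetilde g$ has no conjugate points and $\widetilde M$ is simply connected. So $\beta$ is an $(A^2,0)$-quasi-geodesic for $\widetilde g_-$, and Morse again places it within $D'_-$ of the same $\widetilde g_-$-segment. The triangle inequality for Hausdorff distance then bounds $d_{H,\widetilde g_-}(c,\beta)$ uniformly. Converting back costs a factor $\sqrt A$, which gives the constant $D$ in the first assertion.

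\textbf{Second assertion.} For a complete $F_e$-geodesic $c$, apply the first assertion to $c|_{[-n,n]}$. This produces $\widetilde g$-geodesic segments $\beta_n$ with $d_{H,\widetilde g}(c[-n,n],\beta_n)<D$. Each $\beta_n$ passes within $D$ of $c(0)$, so after reparametrization their initial vectors lie in a compact subset of $T_1\widetilde M$. A subsequence converges to a complete $\widetilde g$-geodesic $\beta$, a limit of minimizing segments and hence minimizing.

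It remains to check Hausdorff closeness in both directions. Every point $c(t)$ is within $D$ of $\beta_n$ for large $n$, at a point which is at bounded distance from the base point $c(0)$. Uniform convergence on compacta then places $c(t)$ within $D$ (non-strict) of $\beta$. Conversely, a point of $\beta$ is a limit of points of $\beta_n$, each within $D$ of $c$, and so it is also within $D$ of $c$. To obtain the strict inequality in the statement, enlarge $D$ slightly.
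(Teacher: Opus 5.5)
Your proposal is correct and follows the same route as the paper. You derive the uniform quasi-geodesic property from \eqref{bilipschitz} and $F_e$-minimality, apply the classical Morse lemma in the constant-curvature comparison metric $\widetilde g_-$, and transfer back to $\widetilde g$ through the uniform equivalence $A^{-1}g\le g_-\le Ag$. Your Step~3 (that $\widetilde g$-geodesics are themselves $\widetilde g_-$-quasi-geodesics because they minimize in the absence of conjugate points) and your limiting argument for complete $F_e$-geodesics spell out what the paper compresses into its citation of \cite[Theorem~2.9]{PS} and the phrase ``transfers the resulting estimates to $\widetilde g$''.
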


\begin{proof}
This is the supercritical Morse argument of
\cite[Theorem~2.9]{PS}, with the negatively curved reference metric
$\widetilde g_-$ above. The uniform equivalence of $g$ and $g_-$ transfers
the resulting estimates to $\widetilde g$.
\end{proof}

\begin{proof}[Proof of Theorem \ref{distinct endpoints}]
Let $\widetilde\sigma:\mathbb R\to\widetilde M$ be a lifted magnetic
geodesic of energy $e$. By Lemma~\ref{minimizantes} it is
$F_e$-minimizing, and hence Lemma~\ref{morse} gives a
$\widetilde g$-geodesic $\gamma$ with
$d_{H,\widetilde g}(\widetilde\sigma,\gamma)<D$. Since $\gamma$ has two
distinct endpoints in $\widetilde M(\infty)$, so does
$\widetilde\sigma$. This proves (i).

For (ii), let $\xi,\eta\in\widetilde M(\infty)$ be distinct. By visibility,
choose a $\widetilde g$-geodesic $c$ with
$c(-\infty)=\xi$ and $c(+\infty)=\eta$. Set $p_n=c(-n)$ and $q_n=c(n)$,
and let $\sigma_n$ be an $F_e$-geodesic joining $p_n$ to $q_n$.
By Lemma~\ref{morse}, the $\sigma_n$ remain uniformly close to the
corresponding geodesic segments. Hence they meet a fixed compact set.
After passing to a subsequence, their initial conditions converge, and
continuous dependence gives a complete magnetic geodesic $\sigma$.
The shadowing estimate gives
\[
\sigma(-\infty)=\xi,\qquad \sigma(+\infty)=\eta.
\]
This proves (ii).
\end{proof}
\section{Liouville's formula and horocyclic coordinates}\label{Liouville's formula and horocyclic coordinates}

We want to establish a relationship between the Lorentz force of the
magnetic system and the geodesic curvature of a family of horocycles of
$(M,g)$.

\begin{lemma}[Liouville]\label{liouville}
Let $(M,g)$ be a $C^{\infty}$ Riemannian surface and let
$\Phi: U \longrightarrow V$ be an oriented local coordinate chart, whose
coordinate vector fields

$$
X=\frac{\partial}{\partial x}\Phi(x,y),
\qquad
Y=\frac{\partial}{\partial y}\Phi(x,y)
$$

are orthogonal. Let $c:(a,b)\longrightarrow V$ be a $C^2$ curve, and let
$\tau(t)$ be the oriented angle formed by $c'(t)$ and $X(c(t))$. Then the
geodesic curvature $k_c(t)$ of $c$ at the point $c(t)$ satisfies

$$
k_c(t)
=
\frac{d\tau}{dt}(t)
+
k_X(c(t))\cos(\tau(t))
+
k_Y(c(t))\sin(\tau(t)),
$$

where $k_X(c(t))$ and $k_Y(c(t))$ are, respectively, the geodesic
curvatures of the integral curves of $X$ and $Y$ at the point $c(t)$.
\end{lemma}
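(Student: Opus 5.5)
We will prove Liouville's formula by working with orthonormal frames. Normalize the coordinate fields,
\[
E_1=\frac{X}{\|X\|},\qquad E_2=\frac{Y}{\|Y\|},
\]
so that $\{E_1,E_2\}$ is a positively oriented orthonormal frame on $V$ (the chart is oriented), and $iE_1=E_2$, $iE_2=-E_1$. Parametrize $c$ by arclength; the formula is stated for the unit-speed case, and the general case follows by reparametrization. Since $\tau$ is the oriented angle from $X$ to $c'$, we can write
\[
c'(t)=\cos\tau(t)\,E_1+\sin\tau(t)\,E_2 .
\]
Covariant differentiation along $c$ then gives
\[
\nabla_{c'}c'=\tau'\,(-\sin\tau\,E_1+\cos\tau\,E_2)+\cos\tau\,\nabla_{c'}E_1+\sin\tau\,\nabla_{c'}E_2 .
\]
The first term is $\tau'\,ic'$.

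Next we identify the frame terms through the connection $1$-form. Let $\alpha$ be defined by $\alpha(v)=g(\nabla_vE_1,E_2)$. Since the frame is orthonormal, $\nabla_vE_1=\alpha(v)E_2$ and $\nabla_vE_2=-\alpha(v)E_1$. Substituting, the frame contribution becomes $\alpha(c')\,(\cos\tau E_2-\sin\tau E_1)=\alpha(c')\,ic'$. Taking the inner product with $ic'$ yields
\[
k_c=\tau'+\alpha(c').
\]
By linearity, $\alpha(c')=\cos\tau\,\alpha(E_1)+\sin\tau\,\alpha(E_2)$.

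It remains to identify $\alpha(E_1)$ and $\alpha(E_2)$ with the curvatures of the coordinate curves.
\begin{itemize}
\item The integral curves of $X$, parametrized by arclength, have unit tangent $E_1$. Hence $k_X=g(\nabla_{E_1}E_1,E_2)=\alpha(E_1)$.
\item The integral curves of $Y$ have unit tangent $E_2$, with $iE_2=-E_1$. Hence $k_Y=g(\nabla_{E_2}E_2,-E_1)=g(E_2,\nabla_{E_2}E_1)=\alpha(E_2)$, using metric compatibility and $g(E_1,E_2)=0$.
\end{itemize}
Substituting these gives exactly $k_c=\tau'+k_X\cos\tau+k_Y\sin\tau$.

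The computation is routine. The main point requiring care is the sign and orientation conventions: the orientation of the chart, the direction of measuring $\tau$, and the choice $ic'$ in the definition of $k_c$. These must be consistent so that the $k_Y$ term enters with a plus sign. A different convention, for example $Y$ oriented so that $\{X,Y\}$ is negative, would flip that sign. This is why the chart is assumed oriented.

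Orthogonality of $X$ and $Y$ is used only to guarantee that the normalized coordinate fields form an orthonormal frame. Note that orthogonality alone does not make $\alpha$ vanish in general. For the identification of $k_X$ and $k_Y$ we only need that the curves in question are integral curves of the normalized fields, and this holds by construction.
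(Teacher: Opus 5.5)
Your proof is correct. From $c'=\cos\tau\,E_1+\sin\tau\,E_2$ you get $k_c=\tau'+\alpha(c')$. Metric compatibility then gives $\alpha(E_1)=k_X$ and $\alpha(E_2)=k_Y$, with exactly the conventions the formula requires: $\tau$ measured from $X$ to $c'$, $k_c=g(\nabla_{c'}c',ic')$, and a positively oriented chart. Your remark that a negatively oriented chart flips the sign of the $k_Y$ term is also right.

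The paper gives no proof of this lemma; it simply quotes it as the classical Liouville formula. The textbook proof (e.g.\ do Carmo, \emph{Differential Geometry of Curves and Surfaces}, \S4-4) has the same skeleton $k_c=\tau'+g(\nabla_{c'}E_1,E_2)$, but it evaluates the connection term in the chart. With $E=\|X\|^2$, $G=\|Y\|^2$ and $c(t)=\Phi(x(t),y(t))$ of unit speed, one computes $g(\nabla_{c'}E_1,E_2)=\frac{1}{2\sqrt{EG}}\bigl(G_x\,y'-E_y\,x'\bigr)$. Applying the same expression to the coordinate curves gives $k_X=-E_y/(2E\sqrt{G})$ and $k_Y=G_x/(2G\sqrt{E})$, and the terms are matched through $\cos\tau=\sqrt{E}\,x'$ and $\sin\tau=\sqrt{G}\,y'$.

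That route buys explicit formulas for $k_X,k_Y$ in terms of the metric coefficients. Yours avoids Christoffel-type computations and shows that the chart plays no role beyond supplying the frame: any $C^1$ positively oriented orthonormal frame works, with $k_X,k_Y$ the curvatures of the integral curves of $E_1,E_2$. This is the form best suited to the paper's application. There the frame is $\{-\nabla b_\gamma,\,i(-\nabla b_\gamma)\}$, up to the orientation issue you raise, and only the $C^1$ regularity of $\nabla b_\gamma$ is needed.

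Two small precisions. First, the formula with $d\tau/dt$ holds only for a regular unit-speed curve. In a general regular parameter the first term is $\tau'(t)/\|c'(t)\|$, so ``the general case follows by reparametrization'' must be read in the arclength parameter. Second, differentiating $\cos\tau\,E_1+\sin\tau\,E_2$ presupposes a $C^1$ determination of $\tau$. This is available: $a=g(c',E_1)$ and $\beta=g(c',E_2)$ are $C^1$ with $a^2+\beta^2=1$, so a continuous lift of the angle exists and is $C^1$ with $\tau'=a\beta'-a'\beta$.
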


Given $\widetilde\theta\in T_1\widetilde M$, let
$\widetilde\sigma_{\widetilde\theta}$ denote the corresponding
unit-speed magnetic geodesic. Let $\gamma$ be a $\widetilde g$-geodesic
shadowing $\widetilde\sigma_{\widetilde\theta}$. Denote by
$H_\gamma^+$ the horocycle determined by $\gamma$, and let

$$
h_\gamma^+:\mathbb R\longrightarrow H_\gamma^+
$$

be its unit-speed parametrization satisfying

$$
h_\gamma^+(0)=\gamma(0),
$$

with the orientation induced by the orientation of $\widetilde M$.

Let $\lambda_t^\gamma$ denote the Busemann flow associated with $\gamma$.
We define the horocyclic parametrization

$$
\Phi_{\widetilde\theta,\gamma}:\mathbb R^2\longrightarrow\widetilde M
$$

by

$$
\Phi_{\widetilde\theta,\gamma}(t,r)
=
\lambda_t^\gamma
\bigl(h_\gamma^+(r)\bigr).
$$

The construction above depends on the choice of the shadow $\gamma$ at
the level of the parametrization, but the associated horocyclic
foliation does not. Indeed, suppose that $\gamma_1$ and $\gamma_2$ are
two $\widetilde g$-geodesics shadowing the same magnetic geodesic
$\widetilde\sigma_{\widetilde\theta}$. Since both shadows have the same
endpoint at infinity,

$$
\gamma_1(+\infty)=\gamma_2(+\infty),
$$

their Busemann functions differ by a constant: $
b_{\gamma_1}=b_{\gamma_2}+C.
$ Consequently, $
\nabla b_{\gamma_1}
=
\nabla b_{\gamma_2},
$ and hence the corresponding Busemann flows and horocyclic foliations
coincide.

The parametrizations $h_{\gamma_1}^+$ and $h_{\gamma_2}^+$ may differ by
a translation of the parameter. Thus the coordinate systems
$\Phi_{\widetilde\theta,\gamma_1}$ and
$\Phi_{\widetilde\theta,\gamma_2}$ may differ by a translation in the
second coordinate. However, their coordinate vector fields define the
same oriented orthogonal frame on $\widetilde M$.

For a fixed shadow $\gamma$, let

$$
X_{\widetilde\theta}
=
\frac{\partial}{\partial t}
\Phi_{\widetilde\theta,\gamma},
\qquad
Y_{\widetilde\theta}
=
\frac{\partial}{\partial r}
\Phi_{\widetilde\theta,\gamma}.
$$
The vector field $X_{\widetilde\theta}$ is tangent to the trajectories of
the Busemann flow, while $Y_{\widetilde\theta}$ is tangent to the
horocycles. Since the trajectories of the Busemann flow are geodesics,
the geodesic curvature of the integral curves of
$X_{\widetilde\theta}$ vanishes.

Let $c:\mathbb R\longrightarrow\widetilde M$ be a regular embedded
$C^2$ curve. Applying Lemma~\ref{liouville} to the coordinate system
$\Phi_{\widetilde\theta,\gamma}$, we obtain

$$
k_c(t)
=
\frac{d\tau}{dt}(t)
+
k_{H_\gamma^+}\bigl(c(t)\bigr)
\sin\bigl(\tau(t)\bigr),
$$

where $\tau:\mathbb R\to\mathbb R$ is a continuous lift of the oriented
angle between $c'(t)$ and
$X_{\widetilde\theta}(c(t))$.

We now apply this formula to the magnetic geodesic
$c=\widetilde\sigma_{\widetilde\theta}$. We denote by
$\tau_{\widetilde\theta}:\mathbb R\to\mathbb R$ the corresponding
continuous lift, normalized by

$$
\tau_{\widetilde\theta}(0)
=
\angle\left(
\widetilde\theta,
-\nabla b_\gamma\bigl(\pi(\widetilde\theta)\bigr)
\right).
$$

We also define

$$
k_{\widetilde\theta}(t)
:=
k_{H_\gamma^+}
\bigl(\widetilde\sigma_{\widetilde\theta}(t)\bigr).
$$

Therefore, along $\widetilde\sigma_{\widetilde\theta}$,
Liouville's formula takes the form

$$
k_{\widetilde\sigma_{\widetilde\theta}}(t)
=
\frac{d\tau_{\widetilde\theta}}{dt}(t)
+
k_{\widetilde\theta}(t)
\sin\bigl(\tau_{\widetilde\theta}(t)\bigr).
$$

The quantities $k_{\widetilde\theta}(t)$ and
$\tau_{\widetilde\theta}(t)$ do not depend on the choice of the shadow
$\gamma$. Indeed, if $\gamma_1$ and $\gamma_2$ are two shadows of
$\widetilde\sigma_{\widetilde\theta}$, then

$$
b_{\gamma_1}=b_{\gamma_2}+C
$$

for some constant $C$. Hence

$$
\nabla b_{\gamma_1}
=
\nabla b_{\gamma_2},
$$

and the corresponding horocyclic foliations coincide. Therefore,

$$
k_{H_{\gamma_1}^+}
\bigl(\widetilde\sigma_{\widetilde\theta}(t)\bigr)
=
k_{H_{\gamma_2}^+}
\bigl(\widetilde\sigma_{\widetilde\theta}(t)\bigr)
$$

for every $t\in\mathbb R$. Moreover,

$$
\angle\left(
\widetilde\theta,
-\nabla b_{\gamma_1}
\bigl(\pi(\widetilde\theta)\bigr)
\right)
=
\angle\left(
\widetilde\theta,
-\nabla b_{\gamma_2}
\bigl(\pi(\widetilde\theta)\bigr)
\right),
$$

so the normalization of $\tau_{\widetilde\theta}(0)$ is also
independent of the choice of $\gamma$. Consequently,
$k_{\widetilde\theta}$ and $\tau_{\widetilde\theta}$ are well-defined.

We next study the behavior of the quantities
$k_{\widetilde\theta}(t)$ and $\tau_{\widetilde\theta}(t)$ under deck
transformations.

\begin{lemma}\label{horocyclic-isometry}
Let $\sigma_\theta$ be a unit-speed magnetic geodesic and let
$\widetilde\sigma_{\widetilde\theta_1}$ and
$\widetilde\sigma_{\widetilde\theta_2}$ be two lifts of
$\sigma_\theta$. Then there exists a deck transformation
$T:\widetilde M\longrightarrow\widetilde M$ such that

$$
\widetilde\sigma_{\widetilde\theta_2}
=
T\circ\widetilde\sigma_{\widetilde\theta_1}.
$$

Moreover, if $\gamma_1$ is a shadow of
$\widetilde\sigma_{\widetilde\theta_1}$, then

$$
\gamma_2=T\circ\gamma_1
$$

is a shadow of $\widetilde\sigma_{\widetilde\theta_2}$.
\end{lemma}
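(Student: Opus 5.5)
The plan is to use covering space theory for the first claim and the fact that deck transformations are isometries preserving $\widetilde\Omega$ for the second.

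\emph{Existence of $T$.} Write $\widetilde\theta_1=(\tilde x_1,\tilde v_1)$ and $\widetilde\theta_2=(\tilde x_2,\tilde v_2)$. Both are lifts of $\theta=(x,v)$, so $p(\tilde x_1)=p(\tilde x_2)=x$. Since $p$ is a regular (Galois) covering, there is a deck transformation $T$ with $T(\tilde x_1)=\tilde x_2$. Because $p\circ T=p$, we have $dp\circ dT=dp$. As $dp$ is injective on tangent spaces, $dT(\tilde v_1)=\tilde v_2$, that is, $dT(\widetilde\theta_1)=\widetilde\theta_2$.

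Next we check that $T$ maps magnetic geodesics to magnetic geodesics. Since $\widetilde g=p^*g$ and $\widetilde\Omega=p^*\Omega$, we get $T^*\widetilde g=\widetilde g$ and $T^*\widetilde\Omega=\widetilde\Omega$. Hence $T$ is an orientation-preserving isometry. (Deck transformations of an oriented surface preserve the lifted orientation, since $p$ is a local orientation-preserving diffeomorphism.) Also $\tilde f\circ T=\tilde f$, where $\tilde f=f\circ p$. So $T$ commutes with $\nabla$ and with the rotation $i$. Therefore, if $\widetilde\sigma$ solves $\nabla_{\dot\sigma}\dot\sigma=\tilde f\, i\dot\sigma$, then $T\circ\widetilde\sigma$ solves the same equation. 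The curve $T\circ\widetilde\sigma_{\widetilde\theta_1}$ is thus a magnetic geodesic with initial velocity $dT(\widetilde\theta_1)=\widetilde\theta_2$. By uniqueness of solutions of \eqref{mag eq}, $T\circ\widetilde\sigma_{\widetilde\theta_1}=\widetilde\sigma_{\widetilde\theta_2}$.

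\emph{Shadows are transported.} Since $T$ is a $\widetilde g$-isometry, $T\circ\gamma_1$ is a $\widetilde g$-geodesic. The isometry also preserves distances between sets, so
\[
d_{H,\widetilde g}\bigl(T\circ\gamma_1,\,T\circ\widetilde\sigma_{\widetilde\theta_1}\bigr)=d_{H,\widetilde g}\bigl(\gamma_1,\widetilde\sigma_{\widetilde\theta_1}\bigr)<D.
\]
Here $D$ is the constant of Lemma~\ref{morse}. Hence $\gamma_2=T\circ\gamma_1$ is at bounded Hausdorff distance from $\widetilde\sigma_{\widetilde\theta_2}$, which is what shadowing means.

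The word ``shadow'' should also carry the orientation convention, namely $\gamma(+\infty)=\widetilde\sigma(+\infty)$. This is the fact used when the horocycle is centred at the forward endpoint. To get it, I would note that $T$ extends to a homeomorphism of $\widetilde M(\infty)$ by sending the class of a geodesic $c$ to the class of $T\circ c$. This extension is well defined because $d(Tc_1(t),Tc_2(t))=d(c_1(t),c_2(t))$, so asymptotic geodesics are sent to asymptotic geodesics. The forward ends of $\gamma_1$ and $\widetilde\sigma_{\widetilde\theta_1}$ agree, so after applying $T$ the forward ends of $\gamma_2$ and $\widetilde\sigma_{\widetilde\theta_2}$ also agree.

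The only delicate point is this endpoint and orientation bookkeeping. Everything else is routine invariance under isometries. I would also record for later use that $T$ maps Busemann functions to Busemann functions up to an additive constant: $b_{\gamma_2}\circ T=b_{\gamma_1}$. Consequently $T$ maps horocycles to horocycles and preserves their geodesic curvature, which is what makes $k_{\widetilde\theta}$ and $\tau_{\widetilde\theta}$ descend to $T_1M$.
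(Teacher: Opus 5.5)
Your proof is correct and follows the paper's argument essentially step for step: transitivity of the deck group on fibres, invariance of the magnetic equation under $T$ plus uniqueness of solutions, isometry-invariance of $d_{H,\widetilde g}$ for the shadow, and $b_{\gamma_2}\circ T=b_{\gamma_1}$ recorded at the end (an exact equality, not merely up to a constant); you additionally justify $dT(\widetilde\theta_1)=\widetilde\theta_2$ via $dp\circ dT=dp$. Your endpoint paragraph goes beyond the paper's notion of shadow, which is just $d_{H,\widetilde g}<D$, and as written it only transports classes of $\widetilde g$-geodesics, whereas $\widetilde\sigma_{\widetilde\theta_1}$ is not one; it is simpler to note that any metric formulation of ``same forward end'' (e.g.\ $\widetilde\sigma_{\widetilde\theta_1}(l)$ within $D$ of $\gamma_1(s(l))$ with $s(l)\to+\infty$) carries over verbatim under the isometry $T$, because $T\circ\widetilde\sigma_{\widetilde\theta_1}=\widetilde\sigma_{\widetilde\theta_2}$ with the same parametrization.
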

\begin{proof}
Since $\widetilde\sigma_{\widetilde\theta_1}$ and
$\widetilde\sigma_{\widetilde\theta_2}$ are lifts of the same curve
$\sigma_\theta$, we have
$p\circ\widetilde\sigma_{\widetilde\theta_1}
=p\circ\widetilde\sigma_{\widetilde\theta_2}=\sigma_\theta$. The deck
transformation group acts transitively on the fibres of $p$, so there is a
deck transformation $T$ with
$T\bigl(\widetilde\sigma_{\widetilde\theta_1}(0)\bigr)
=\widetilde\sigma_{\widetilde\theta_2}(0)$ and
$dT\bigl(\dot{\widetilde\sigma}_{\widetilde\theta_1}(0)\bigr)
=\dot{\widetilde\sigma}_{\widetilde\theta_2}(0)$. Since $T$ preserves
$\widetilde g$ and $\widetilde\Omega$, it preserves the magnetic equation
\eqref{mag eq}, so $T\circ\widetilde\sigma_{\widetilde\theta_1}$ is a
magnetic geodesic with the same initial condition as
$\widetilde\sigma_{\widetilde\theta_2}$; by uniqueness of solutions,
\[
\widetilde\sigma_{\widetilde\theta_2}=T\circ\widetilde\sigma_{\widetilde\theta_1}.
\]

Let now $\gamma_1$ be a shadow of $\widetilde\sigma_{\widetilde\theta_1}$
and set $\gamma_2=T\circ\gamma_1$, which is a $\widetilde g$-geodesic since
$T$ is a $\widetilde g$-isometry. Isometries preserve Hausdorff distances,
so
\[
d_{H,\widetilde g}\bigl(\gamma_2,\widetilde\sigma_{\widetilde\theta_2}\bigr)
=d_{H,\widetilde g}\bigl(T\gamma_1,T\widetilde\sigma_{\widetilde\theta_1}\bigr)
=d_{H,\widetilde g}\bigl(\gamma_1,\widetilde\sigma_{\widetilde\theta_1}\bigr)<D ,
\]
and $\gamma_2$ is a shadow of $\widetilde\sigma_{\widetilde\theta_2}$.

Finally, from the definition of the Busemann function and the fact that $T$
is an isometry with $T\circ\gamma_1=\gamma_2$ we get
$b_{\gamma_2}\circ T=b_{\gamma_1}$; in particular $T$ maps the horocyclic
foliation of $\gamma_1$ onto that of $\gamma_2$.
\end{proof}

\begin{lemma}\label{invariance-k-tau}
Let $T:\widetilde M\longrightarrow\widetilde M$ be a deck
transformation. Then, for every $\widetilde\theta\in T_1\widetilde M$
and every $t\in\mathbb R$,
\[
k_{dT(\widetilde\theta)}(t)
=
k_{\widetilde\theta}(t)
\]
and
\[
\tau_{dT(\widetilde\theta)}(t)
=
\tau_{\widetilde\theta}(t).
\]
\end{lemma}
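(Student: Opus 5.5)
The plan is to transport all the data defining $k_{\widetilde\theta}$ and $\tau_{\widetilde\theta}$ along the deck transformation $T$, using that $T$ is an orientation-preserving isometry of $\widetilde g$ which also preserves $\widetilde\Omega$. Write $\widetilde\theta_2=dT(\widetilde\theta)$.

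As in the proof of Lemma~\ref{horocyclic-isometry}, $T$ maps magnetic geodesics to magnetic geodesics, and uniqueness of solutions gives $\widetilde\sigma_{\widetilde\theta_2}=T\circ\widetilde\sigma_{\widetilde\theta}$. If $\gamma$ is a shadow of $\widetilde\sigma_{\widetilde\theta}$, that lemma shows $T\circ\gamma$ is a shadow of $\widetilde\sigma_{\widetilde\theta_2}$, and $b_{T\gamma}\circ T=b_\gamma$. Since $k$ and $\tau$ were shown to be independent of the choice of shadow, we may compute them using the shadow $T\gamma$.

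Differentiating $b_{T\gamma}\circ T=b_\gamma$ and using that $T$ is an isometry gives
\[
dT\bigl(\nabla b_\gamma(x)\bigr)=\nabla b_{T\gamma}(Tx).
\]
Hence $T$ conjugates the Busemann flows, $T\circ\lambda^\gamma_t=\lambda^{T\gamma}_t\circ T$, and it maps the horocycles of $\gamma$ onto those of $T\gamma$. Because $T$ preserves orientation, it also carries the oriented unit-speed parametrizations onto each other: $T\circ h^+_\gamma$ is a unit-speed parametrization of $H^+_{T\gamma}$ with the correct orientation, starting at $T\gamma(0)$. Consequently
\[
T\circ\Phi_{\widetilde\theta,\gamma}=\Phi_{\widetilde\theta_2,T\gamma},
\]
and $dT$ sends the frame $(X_{\widetilde\theta},Y_{\widetilde\theta})$ to $(X_{\widetilde\theta_2},Y_{\widetilde\theta_2})$.

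For $k$, note that geodesic curvature is invariant under orientation-preserving isometries: $T$ preserves $\nabla$ and commutes with the rotation $i$. Therefore
\[
k_{H^+_{T\gamma}}\bigl(T\widetilde\sigma_{\widetilde\theta}(t)\bigr)=k_{H^+_\gamma}\bigl(\widetilde\sigma_{\widetilde\theta}(t)\bigr),
\]
which is exactly $k_{\widetilde\theta_2}(t)=k_{\widetilde\theta}(t)$.

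For $\tau$, observe that $dT$ preserves oriented angles. The oriented angle between $\dot{\widetilde\sigma}_{\widetilde\theta_2}(t)=dT\,\dot{\widetilde\sigma}_{\widetilde\theta}(t)$ and $X_{\widetilde\theta_2}=dT\,X_{\widetilde\theta}$ therefore agrees with the original angle modulo $2\pi$. Thus $\tau_{\widetilde\theta}$ and $\tau_{\widetilde\theta_2}$ are two continuous lifts of the same circle-valued function. At $t=0$, $\tau_{\widetilde\theta_2}(0)=\angle\bigl(dT\widetilde\theta,\,dT(-\nabla b_\gamma)\bigr)=\tau_{\widetilde\theta}(0)$ with the same normalization of the angle. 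Two continuous lifts that agree at one point coincide everywhere.

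The main subtlety is orientation. The claim requires $T$ to be orientation preserving, which holds because $M$ is oriented and deck transformations are lifts of the identity. This is what makes the horocycle parametrization, the sign of the curvature, and the oriented angle all transform correctly. The rest is bookkeeping.
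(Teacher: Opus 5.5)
Your proof is correct and follows essentially the same route as the paper: transport the shadow by $T$ (Lemma~\ref{horocyclic-isometry}), and use $\nabla b_{T\circ\gamma}(Tp)=dT\bigl(\nabla b_\gamma(p)\bigr)$ together with the fact that $T$ is an orientation-preserving isometry. This gives invariance of the horocycle curvature and of the oriented angle modulo $2\pi$, and you conclude by uniqueness of continuous lifts with the same normalization at $t=0$. Your extra identity $T\circ\Phi_{\widetilde\theta,\gamma}=\Phi_{dT(\widetilde\theta),T\circ\gamma}$ and your explicit justification that deck transformations preserve orientation are harmless refinements that the argument does not need.
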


\begin{proof}
Let $\gamma$ be a shadow of $\widetilde\sigma_{\widetilde\theta}$. By
Lemma~\ref{horocyclic-isometry},
\[
\widetilde\sigma_{dT(\widetilde\theta)}=T\circ\widetilde\sigma_{\widetilde\theta},
\qquad
b_{T\circ\gamma}\circ T=b_\gamma ,
\]
and $T\circ\gamma$ is a shadow of $\widetilde\sigma_{dT(\widetilde\theta)}$.

Since $T$ is a $\widetilde g$-isometry, it maps $H^+_\gamma$ onto
$H^+_{T\circ\gamma}$ and preserves geodesic curvature, so
\[
k_{dT(\widetilde\theta)}(t)
=k_{H^+_{T\circ\gamma}}\bigl(T(\widetilde\sigma_{\widetilde\theta}(t))\bigr)
=k_{H^+_\gamma}\bigl(\widetilde\sigma_{\widetilde\theta}(t)\bigr)
=k_{\widetilde\theta}(t).
\]

For the angle, differentiating $b_{T\circ\gamma}\circ T=b_\gamma$ gives
$\nabla b_{T\circ\gamma}(T(p))=dT\bigl(\nabla b_\gamma(p)\bigr)$. Taking
$p=\widetilde\sigma_{\widetilde\theta}(t)$ and using that $dT$ is a linear
isometry preserving the orientation, the oriented angle between
$\dot{\widetilde\sigma}_{\widetilde\theta}(t)$ and
$-\nabla b_\gamma$ at $p$ equals the one between
$\dot{\widetilde\sigma}_{dT(\widetilde\theta)}(t)$ and
$-\nabla b_{T\circ\gamma}$ at $T(p)$. Hence
\[
\tau_{dT(\widetilde\theta)}(t)\equiv\tau_{\widetilde\theta}(t)
\pmod{2\pi},
\]
and since both are continuous lifts agreeing at $t=0$ by the normalization
of $\tau$, they agree for every $t\in\mathbb R$.
\end{proof}

\begin{corollary}\label{k-em-M}
The functions $k_{\widetilde\theta}(0)$ and
$\tau_{\widetilde\theta}(0)$ depend only on the projection
$\theta\in T_1M$ of $\widetilde\theta$. Hence they define
well-defined functions

$$
k,\tau:T_1M\longrightarrow\mathbb R
$$

given by

$$
k(\theta)
=
k_{\widetilde\theta}(0)
$$

and

$$
\tau(\theta)
=
\tau_{\widetilde\theta}(0),
$$

where $\widetilde\theta$ is any lift of $\theta$.
\end{corollary}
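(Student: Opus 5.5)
The corollary follows from Lemma~\ref{invariance-k-tau} evaluated at $t=0$, together with the fact that the deck group acts transitively on the fibres of $T_1\widetilde M\to T_1M$.

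Let $\widetilde\theta_1,\widetilde\theta_2\in T_1\widetilde M$ be two lifts of the same $\theta\in T_1M$. Their base points lie in the same fibre of $p$, so there is a deck transformation $T$ with $T(\pi(\widetilde\theta_1))=\pi(\widetilde\theta_2)$. Since $T$ covers the identity on $M$, $dT$ maps the lift $\widetilde\theta_1$ of $\theta$ at $\pi(\widetilde\theta_1)$ to the lift of $\theta$ at $\pi(\widetilde\theta_2)$. That lift is unique, so $dT(\widetilde\theta_1)=\widetilde\theta_2$.

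Applying Lemma~\ref{invariance-k-tau} with $t=0$ gives
\[
k_{\widetilde\theta_2}(0)=k_{dT(\widetilde\theta_1)}(0)=k_{\widetilde\theta_1}(0),
\qquad
\tau_{\widetilde\theta_2}(0)=\tau_{\widetilde\theta_1}(0).
\]
Hence $k(\theta):=k_{\widetilde\theta}(0)$ and $\tau(\theta):=\tau_{\widetilde\theta}(0)$ do not depend on the choice of lift. These values are already independent of the shadow $\gamma$, as shown before Lemma~\ref{horocyclic-isometry}, so $k$ and $\tau$ are well defined on $T_1M$.

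One detail needs care: $\tau$ is real-valued, not merely defined modulo $2\pi$. This causes no ambiguity, because $\tau_{\widetilde\theta}(0)$ is fixed by the stated normalization as a specific oriented angle in a fixed range. Lemma~\ref{invariance-k-tau} preserves this normalized value, not just its class mod $2\pi$. This is the only point I would check carefully; the rest is formal.
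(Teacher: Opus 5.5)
Your proof is correct and follows essentially the same approach as the paper: the corollary is stated without proof as an immediate consequence of Lemma~\ref{invariance-k-tau} at $t=0$, combined with the transitivity of the deck group on fibres already used in Lemma~\ref{horocyclic-isometry}. Two details the paper leaves implicit are filled in by you: the check that $dT(\widetilde\theta_1)=\widetilde\theta_2$ via uniqueness of lifts, and the observation that the normalization fixes $\tau_{\widetilde\theta}(0)$ as an actual real number rather than only a class mod $2\pi$.
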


\section{Key lemmas and Proof of Theorem A} \label{Key lemmas and Proof of Theorem A}

Fix $\widetilde\theta\in T_1\widetilde M$ and let $\gamma$ be a
$\widetilde g$-geodesic shadowing $\widetilde\sigma_{\widetilde\theta}$,
say
\begin{equation}\label{shadow}
d_{H,\widetilde g}\bigl(\widetilde\sigma_{\widetilde\theta},\gamma\bigr)<D,
\end{equation}
with $D$ the constant of Lemma~\ref{morse}. Since $b_\gamma$ is defined up
to an additive constant --- equivalently, since $\gamma$ may be translated
in its parameter --- we normalize it by
\[
b_\gamma\bigl(\widetilde\sigma_{\widetilde\theta}(0)\bigr)=0 .
\]
This does \emph{not} mean that $\gamma$ meets
$\widetilde\sigma_{\widetilde\theta}$: it only places
$\widetilde\sigma_{\widetilde\theta}(0)$ on the horocycle $H^+_\gamma(0)$.

Because $b_\gamma$ is $C^1$ with $\|\nabla b_\gamma\|\equiv1$, along any
orbit of the Busemann flow $\lambda^\gamma_t$ (the flow of
$-\nabla b_\gamma$) we have
$\frac{d}{dt}b_\gamma\bigl(\lambda^\gamma_t(p)\bigr)=-1$, whence
\begin{equation}\label{busemann coord}
b_\gamma\bigl(\Phi_{\widetilde\theta,\gamma}(t,r)\bigr)=-t
\qquad\text{and}\qquad
b_\gamma\bigl(\gamma(s)\bigr)=-s .
\end{equation}
Moreover $\Phi_{\widetilde\theta,\gamma}$ is a homeomorphism of
$\mathbb R^2$ onto $\widetilde M$: the horocycles are properly embedded
copies of $\mathbb R$ foliating $\widetilde M$, and $\lambda^\gamma_t$ maps
$H^+_\gamma(s)$ diffeomorphically onto $H^+_\gamma(s+t)$.

We write
\[
\widetilde\sigma_{\widetilde\theta}(l)
=\Phi_{\widetilde\theta,\gamma}\bigl(u(l),v(l)\bigr),\qquad l\in\mathbb R ,
\]
so that by \eqref{busemann coord}
\[
u(l)=-b_\gamma\bigl(\widetilde\sigma_{\widetilde\theta}(l)\bigr),
\qquad u(0)=0 .
\]
The value $v(0)$ is an arbitrary real number. Since $b_\gamma$ is
$1$-Lipschitz and $\widetilde\sigma_{\widetilde\theta}$ has unit
$\widetilde g$-speed, $u$ is $1$-Lipschitz:
\begin{equation}\label{lipschitz u}
|u(l)-u(l')|\leq|l-l'|\qquad\text{for all }l,l'\in\mathbb R .
\end{equation}
Finally, if $d_{\widetilde g}\bigl(\widetilde\sigma_{\widetilde\theta}(l),
\gamma(s)\bigr)<D$, then \eqref{busemann coord} gives
\begin{equation}\label{u vs s}
\bigl|u(l)-s\bigr|<D .
\end{equation}


\begin{lemma}\label{u propia}
Under the standing hypotheses, $u(l)\to+\infty$ as $l\to+\infty$.
Consequently, for every $\ell\geq0$ the set
\[
I_\ell:=\bigl\{\,l\geq0:\ u(l)=\ell\,\bigr\}
\]
is nonempty and compact; equivalently,
$\widetilde\sigma_{\widetilde\theta}[0,\infty)$ meets every horocycle
$H^+_\gamma(\ell)$, $\ell\geq0$, in a nonempty compact set.
\end{lemma}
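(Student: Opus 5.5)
Using the shadowing estimate \eqref{shadow}, we relate $u(l)$ to the parameter of a nearby point of $\gamma$ via \eqref{u vs s}, and we use that $\widetilde\sigma_{\widetilde\theta}$ is a proper curve going to $\gamma(+\infty)$. First, $\widetilde\sigma_{\widetilde\theta}$ is proper. By Lemma~\ref{minimizantes} it is $F_e$-minimizing, so $d_{F_e}(\widetilde\sigma_{\widetilde\theta}(0),\widetilde\sigma_{\widetilde\theta}(l))$ grows linearly in $|l|$. The $F_e$-length of a unit-speed segment is at least $(\sqrt{2e}-c_0)|l|$ by \eqref{bilipschitz}, and $F_e$-minimality makes this length equal to the $F_e$-distance. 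By the bi-Lipschitz comparison \eqref{bilipschitz} (note that $d_{F_e}$ is possibly asymmetric, but this is harmless), we get
\[
d_{\widetilde g}\bigl(\widetilde\sigma_{\widetilde\theta}(0),\widetilde\sigma_{\widetilde\theta}(l)\bigr)\geq \frac{\sqrt{2e}-c_0}{\sqrt{2e}+c_0}\,|l| .
\]

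Next, for each $l$ choose $s(l)$ with $d_{\widetilde g}(\widetilde\sigma_{\widetilde\theta}(l),\gamma(s(l)))<D$. Then $|s(l)|\to\infty$ as $l\to\pm\infty$ by the linear growth above, since $|s(l)-s(0)|\geq d(\gamma(s(l)),\gamma(s(0)))$ is large. The key point is the sign: we need $s(l)\to+\infty$, not $-\infty$, as $l\to+\infty$. This is where the choice of shadow matters. The shadow $\gamma$ is oriented so that $\gamma(+\infty)=\widetilde\sigma_{\widetilde\theta}(+\infty)$; this is implicit in Theorem~\ref{distinct endpoints}(i) and in the definition of $\kappa$ via the forward endpoint, and we would say it explicitly. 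We then argue by continuity. We can choose $s(l)$ depending on $l$ with jumps bounded uniformly: if $|l-l'|\le1$, then $|s(l)-s(l')|\le 2D+1$. Hence $s(l)$ cannot pass from very negative to very positive values without crossing bounded values. Bounded $s$ with large $l$ contradicts linear growth, so $s(l)$ eventually has a constant sign as $l\to+\infty$, and likewise as $l\to-\infty$. If the sign were the same at both ends, then $\widetilde\sigma_{\widetilde\theta}(\pm\infty)$ would both equal $\gamma(\text{that end})$, contradicting distinct endpoints (Theorem~\ref{distinct endpoints}(i)). The orientation of $\gamma$ then forces $s(l)\to+\infty$ as $l\to+\infty$, and by \eqref{u vs s}, $u(l)\geq s(l)-D\to+\infty$.

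Finally, $u$ is continuous by \eqref{lipschitz u} and $u(0)=0$, so the intermediate value theorem gives $I_\ell\neq\emptyset$ for every $\ell\geq0$. The set $I_\ell$ is closed, being a preimage of a point under the continuous map $u$ restricted to $[0,\infty)$. It is bounded because $u(l)\to+\infty$, so $u(l)>\ell$ for all large $l$. Therefore $I_\ell$ is compact. The equivalence with the horocycle statement follows from \eqref{busemann coord}: $\widetilde\sigma_{\widetilde\theta}(l)\in H^+_\gamma(\ell)$ iff $b_\gamma=-\ell$ iff $u(l)=\ell$.

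The main obstacle is the sign issue in the second step, namely ensuring that the forward end of the magnetic geodesic goes to the forward end of the shadow. I expect to handle it with the bounded-jump continuity argument together with the distinctness of endpoints.
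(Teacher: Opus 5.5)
Your proof is correct and follows the paper's own route. That route is: linear $\widetilde g$-growth of $\widetilde\sigma_{\widetilde\theta}$ from $F_e$-minimality and \eqref{bilipschitz}, then $|u(l)-s(l)|<D$ with $s(l)\to+\infty$ fixed by the common forward endpoint, then the intermediate value theorem and properness for $I_\ell$. Your bounded-jump argument and the explicit convention $\gamma(+\infty)=\widetilde\sigma_{\widetilde\theta}(+\infty)$ make rigorous a step the paper only asserts, since finite Hausdorff distance alone does not see orientation; given that convention, the appeal to Theorem~\ref{distinct endpoints}(i) is not actually needed.
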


\begin{proof}
By \eqref{shadow}, for every $l$ there is $s(l)$ with
$d_{\widetilde g}\bigl(\widetilde\sigma_{\widetilde\theta}(l),\gamma(s(l))\bigr)<D$,
so \eqref{u vs s} gives $|u(l)-s(l)|<D$. By Lemma~\ref{minimizantes} the
curve $\widetilde\sigma_{\widetilde\theta}$ is $F_e$-minimizing, hence by
\eqref{bilipschitz}
\[
d_{\widetilde g}\bigl(\widetilde\sigma_{\widetilde\theta}(0),
\widetilde\sigma_{\widetilde\theta}(l)\bigr)
\geq\bigl(\sqrt{2e}+c_0\bigr)^{-1}
L_{F_e}\bigl(\widetilde\sigma_{\widetilde\theta}|_{[0,l]}\bigr)
\geq\frac{\sqrt{2e}-c_0}{\sqrt{2e}+c_0}\,l
\xrightarrow[l\to\infty]{}\infty .
\]
Therefore $|s(l)|\to\infty$. Since $\widetilde\sigma_{\widetilde\theta}$
and $\gamma$ are at finite Hausdorff distance, they have the same endpoint
at $+\infty$, so $s(l)\to+\infty$ as $l\to+\infty$, and $|u(l)-s(l)|<D$
gives $u(l)\to+\infty$.

As $u$ is continuous with $u(0)=0$ and $u(l)\to+\infty$, it attains every
value $\ell\geq0$ on $[0,\infty)$, so $I_\ell\neq\emptyset$; and $I_\ell$ is
closed and bounded because $u$ is proper on $[0,\infty)$.
\end{proof}


\begin{lemma}\label{ultima salida}
For every $\ell\geq0$ there is a unique $t_\ell\in I_\ell$ with
\[
v(t_\ell)=\max\bigl\{\,v(l):\ l\in I_\ell\,\bigr\}=:\rho_\ell .
\]
Moreover $t_\ell\geq\ell$; in particular $t_\ell>t_0$ whenever $\ell>t_0$.
\end{lemma}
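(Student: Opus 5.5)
The plan is to obtain existence of the maximizer from compactness, uniqueness from injectivity of the magnetic geodesic, and the bound $t_\ell\geq\ell$ from the Lipschitz estimate \eqref{lipschitz u}.

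\textbf{Existence.} By Lemma~\ref{u propia}, the set $I_\ell$ is nonempty and compact. The coordinate function $v(l)$ is the second component of $\Phi_{\widetilde\theta,\gamma}^{-1}\bigl(\widetilde\sigma_{\widetilde\theta}(l)\bigr)$. Since $\Phi_{\widetilde\theta,\gamma}$ is a homeomorphism of $\mathbb R^2$ onto $\widetilde M$, $v$ is continuous. Hence $v$ attains its maximum $\rho_\ell$ on $I_\ell$.

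\textbf{Uniqueness.} Suppose $l\neq l'$ both lie in $I_\ell$ and $v(l)=v(l')=\rho_\ell$. Then $\bigl(u(l),v(l)\bigr)=\bigl(u(l'),v(l')\bigr)=(\ell,\rho_\ell)$. Since $\Phi_{\widetilde\theta,\gamma}$ is injective, this gives $\widetilde\sigma_{\widetilde\theta}(l)=\widetilde\sigma_{\widetilde\theta}(l')$. This contradicts Lemma~\ref{minimizantes}: $\widetilde\sigma_{\widetilde\theta}$ is a globally minimizing $F_e$-geodesic, so the $F_e$-distance between these two equal points would be the $F_e$-length of $\widetilde\sigma_{\widetilde\theta}$ between $l$ and $l'$. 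By \eqref{bilipschitz}, that length is at least $(\sqrt{2e}-c_0)|l-l'|>0$, while the distance from a point to itself is $0$. (If one prefers to avoid the asymmetry of $F_e$, orient the segment from the smaller to the larger parameter; minimality applies in that direction.) So $\widetilde\sigma_{\widetilde\theta}$ is injective, and the maximizer $t_\ell$ is unique.

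\textbf{The bound $t_\ell\geq\ell$.} Since $t_\ell\geq0$, $u(0)=0$ and $u(t_\ell)=\ell$, the Lipschitz estimate \eqref{lipschitz u} gives
\[
\ell=|u(t_\ell)-u(0)|\leq t_\ell .
\]
Consequently, if $\ell>t_0$ then $t_\ell\geq\ell>t_0$.

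The only step requiring care is uniqueness, and specifically the injectivity of $\widetilde\sigma_{\widetilde\theta}$, which comes from global minimality. Everything else follows directly from Lemma~\ref{u propia} and \eqref{lipschitz u}.
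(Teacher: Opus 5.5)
Your proof is correct and follows the paper's argument. Existence comes from the compactness of $I_\ell$ and the continuity of $v$, uniqueness from the injectivity of $\Phi_{\widetilde\theta,\gamma}$ and of $\widetilde\sigma_{\widetilde\theta}$, and $t_\ell\geq\ell$ from the Lipschitz bound on $u$. The only difference is that you spell out why global $F_e$-minimality and the bi-Lipschitz estimate force $\widetilde\sigma_{\widetilde\theta}$ to be injective, which the paper leaves implicit.
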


\begin{proof}
The maximum exists by Lemma~\ref{u propia} and the continuity of $v$.
Uniqueness follows from the injectivity of $\Phi_{\widetilde\theta,\gamma}$
and of $\widetilde\sigma_{\widetilde\theta}$ (the latter is $F_e$-minimizing
by Lemma~\ref{minimizantes}): the point
$\Phi_{\widetilde\theta,\gamma}(\ell,\rho_\ell)$ determines its parameter.

By \eqref{lipschitz u} with $l'=0$ and $u(0)=0$,
\[
\ell=u(t_\ell)\leq t_\ell .\qedhere
\]
\end{proof}

We need the following classical result of differential geometry: 

\begin{lemma}[Hopf]\label{Hopf's Lemma}
Let $\Phi: U \subset \mathbb{R}^2 \to M$ be a parametrization of $M$, and
let $c: [a,b] \rightarrow \Phi(U) \subset S$ be a simple, piecewise
regular, closed curve with vertices $c(t_i)$ and exterior angles
$\beta_i$, $i=0,\dots,k$. Let
$\psi_i: [t_i,t_{i+1}] \rightarrow \mathbb R$ be the differentiable
function that measures at each $t\in[t_i,t_{i+1}]$ the oriented angle
between $\frac{\partial\Phi}{\partial u_1}$ and $c(t)$. Then,
$$
\sum_{i=0}^k
(\psi_i(t_{i+1})-\psi_i(t_i))
=
\pm2\pi-\sum_{i=0}^k\beta_i,
$$

where the sign depends on the orientation of $c$.
\end{lemma}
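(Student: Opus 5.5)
The statement is the classical Umlaufsatz with respect to a coordinate frame. My plan is to reduce it to the Gauss--Bonnet theorem together with Liouville's formula, Lemma~\ref{liouville}. Because $\Phi$ is a parametrization, I can apply Gram--Schmidt to $\partial\Phi/\partial u_1,\partial\Phi/\partial u_2$ and obtain a smooth orthonormal frame $\{E_1,E_2\}$ on $\Phi(U)$. The oriented angle from $E_1$ to $c'$ coincides with the angle $\psi_i$ from $\partial\Phi/\partial u_1$ to $c'$, since $E_1$ is a positive multiple of $\partial\Phi/\partial u_1$. I therefore work with this frame throughout. Alternatively, I can reduce first to orthogonal coordinates, which exist locally on surfaces, but using the frame avoids that step.

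The first step is the main one. I pull the frame back to the plane and compare $\Phi^*g$ with the Euclidean metric through the homotopy $g_s=(1-s)\Phi^*g+s\,g_{\mathrm{euc}}$, $s\in[0,1]$. Each $g_s$ is a Riemannian metric on $U$. For each $s$, let $\psi_i^s$ be the angle between $\partial_{u_1}$ and $c'$ measured in $g_s$, and let $\beta_i^s$ be the exterior angle at the vertex $c(t_i)$ measured in $g_s$. The quantity
\[
S(s)=\sum_i\bigl(\psi_i^s(t_{i+1})-\psi_i^s(t_i)\bigr)+\sum_i\beta_i^s
\]
depends continuously on $s$. It is also an integer multiple of $2\pi$, because it is the total turning, measured against a fixed nonvanishing vector field, of a closed piecewise curve, with the jumps at the vertices accounted for by the $\beta_i^s$. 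Being continuous and $2\pi\mathbb Z$-valued, $S$ is constant in $s$.

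At $s=1$ the metric is Euclidean, and the classical theorem of turning tangents for a simple closed piecewise regular plane curve gives $S(1)=\pm2\pi$, with the sign fixed by the orientation. This reduction to the plane is the step I expect to be the main obstacle. Two points need care. First, the exterior angles must be chosen in $(-\pi,\pi)$ consistently along the homotopy, so that $S(s)$ is genuinely continuous; this holds as long as no cusps appear, and they do not, since $g_s$-angles vary continuously and stay away from $\pm\pi$. Second, the planar result is itself Hopf's Umlaufsatz, which I take as known.

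As a cross-check on signs, I would use Liouville's formula with the frame $\{E_1,E_2\}$, integrated over each arc:
\[
\int k_c\,dt=\sum_i\bigl(\psi_i(t_{i+1})-\psi_i(t_i)\bigr)+\int\langle\nabla_{c'}E_1,E_2\rangle\,dt.
\]
Combining this with Stokes' theorem applied to the connection form, which turns the last integral into $-\int K\,dA$ over the enclosed region, should recover Gauss--Bonnet. That confirms the sign convention $\pm2\pi-\sum\beta_i$ and finishes the proof.
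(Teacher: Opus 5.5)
The paper gives no proof of this lemma. It is quoted as the classical theorem of turning tangents in a coordinate chart, as in do Carmo's treatment of Gauss--Bonnet. Your argument is the standard proof behind that citation, and it is correct: you deform $\Phi^*g$ linearly to the Euclidean metric on $U$, observe that $S(s)$ is continuous and $2\pi\mathbb Z$-valued, and evaluate at $s=1$ with the planar Umlaufsatz. Continuity of $\psi_i^s(t_{i+1})-\psi_i^s(t_i)$ follows by lifting the $S^1$-valued angle on the rectangle $[0,1]\times[t_i,t_{i+1}]$. Compared with a Gauss--Bonnet-type route, this argument only uses the metric along the curve. So it needs no hypothesis that $c$ bounds a disc inside $\Phi(U)$, which the statement does not assume.

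Two small corrections are needed.

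First, the statement does not exclude cusps, and your remark that the $g_s$-angles ``stay away from $\pm\pi$'' is circular as written. Cusps are not ruled out in the application either: the paper only uses $|\beta_i|\le\pi$, so tangencies of the magnetic geodesic with the horocycle at $P_1$ or $P_2$ can occur. The correct observation is that ``$c'(t_i^+)$ is a negative multiple of $c'(t_i^-)$'' is a metric-independent condition. Hence non-cusp vertices remain non-cusps for every $g_s$, with $\beta_i^s\in(-\pi,\pi)$ varying continuously. At a genuine cusp, the usual determinant sign convention is also metric-independent, so $\beta_i^s\equiv\pm\pi$ is constant in $s$, and $S$ stays continuous.

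Second, the final Liouville--Stokes paragraph is superfluous and should not be presented as finishing the proof. The local Gauss--Bonnet formula is normally derived from this very lemma, so using it would be circular. Moreover, Stokes' theorem would require the region enclosed by $c$ to lie in $\Phi(U)$, which is not part of the hypotheses.
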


\begin{lemma}\label{bounded tau}
Let $(M,g)$ be a closed oriented surface of genus greater than one and
without focal points, and suppose that the magnetic flow on the energy
level $E=\frac12$ has no conjugate points. Then for every
$\widetilde\theta\in T_1\widetilde M$ there exist a constant
$C=C(\widetilde\theta)>0$ and an unbounded set $S\subset[0,\infty)$ such
that
\[
\bigl|\tau_{\widetilde\theta}(t)\bigr|\leq C
\qquad\text{for every }t\in S .
\]
\end{lemma}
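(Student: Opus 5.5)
The set $S$ will be the times of last exit from the horocycles, $S=\{t_\ell:\ \ell\ge 0\}$, which is unbounded because $t_\ell\ge\ell$ by Lemma~\ref{ultima salida}. The bound on $\tau_{\widetilde\theta}(t_\ell)$ will come from Hopf's Lemma~\ref{Hopf's Lemma} applied in the horocyclic chart $\Phi_{\widetilde\theta,\gamma}$ to a closed piecewise regular curve. On that curve the angle with $X_{\widetilde\theta}$ can be computed explicitly on every piece except the magnetic arc.

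Fix $\ell>0$ and set $t_0$ as in Lemma~\ref{ultima salida}. Consider the point $\widetilde\sigma_{\widetilde\theta}(t_\ell)=\Phi(\ell,\rho_\ell)$ and the point $\widetilde\sigma_{\widetilde\theta}(t_0)=\Phi(0,\rho_0)$. I build a closed curve $c$ from four pieces:
\begin{enumerate}
\item[(a)] the magnetic arc $\widetilde\sigma_{\widetilde\theta}|_{[t_0,t_\ell]}$;
\item[(b)] the horocyclic segment $r\mapsto\Phi(\ell,r)$ from $\rho_\ell$ to a large value $R$;
\item[(c)] the Busemann-flow segment $t\mapsto\Phi(t,R)$ from $\ell$ back to $0$;
\item[(d)] the horocyclic segment $r\mapsto\Phi(0,r)$ from $R$ back to $\rho_0$.
\end{enumerate}

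The first task is to show that $c$ is simple for suitable $R$. Since $t_\ell$ and $t_0$ are last exits (maximal $v$), the arc $\widetilde\sigma_{\widetilde\theta}(t_0,t_\ell]$ does not meet $\{u=\ell,\ r>\rho_\ell\}$ or $\{u=0,\ r>\rho_0\}$. The arc is compact, so choosing $R$ larger than $\max v$ on $[t_0,t_\ell]$ keeps it away from piece (c). Injectivity of $\widetilde\sigma_{\widetilde\theta}$ (it is $F_e$-minimizing by Lemma~\ref{minimizantes}) and of $\Phi$ give simplicity. One point needs care: the arc might revisit $u=\ell$ or $u=0$ at lower $r$ values. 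These touchings are harmless because the curve still remains simple.

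Along pieces (b), (c) and (d) the tangent is $\pm Y$ or $\pm X$, so the angle with $X=\partial_t\Phi$ is constant on each piece and contributes zero variation. The exterior angles at the corners $\Phi(\ell,R)$ and $\Phi(0,R)$ are $\pm\pi/2$ because the chart is orthogonal. At the corners $\widetilde\sigma_{\widetilde\theta}(t_\ell)$ and $\widetilde\sigma_{\widetilde\theta}(t_0)$ the exterior angles lie in $(-\pi,\pi)$. They are expressed through $\tau_{\widetilde\theta}(t_\ell)$ and $\tau_{\widetilde\theta}(t_0)$ modulo $2\pi$, more precisely through their representatives in a bounded window fixed by the last-exit property: at a last exit, $u$ crosses the level $\ell$ and the trajectory does not go to the side $r>\rho_\ell$. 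Hopf's formula then gives
\[
\tau_{\widetilde\theta}(t_\ell)-\tau_{\widetilde\theta}(t_0)=\pm2\pi-\sum_i\beta_i+(\text{constants}),
\]
where every term on the right is bounded by an absolute constant, about $4\pi$. Hence $|\tau_{\widetilde\theta}(t_\ell)|\le|\tau_{\widetilde\theta}(t_0)|+C_0$ with $C_0$ universal. Taking $C=|\tau_{\widetilde\theta}(t_0)|+C_0$ proves the lemma. Since the whole curve lies in the single global chart $\Phi$, which is a homeomorphism onto $\widetilde M$, Hopf's lemma applies. The chart is only $C^1$, since $b_\gamma$ is $C^{1,r}$, but in the absence of focal points I expect the regularity to suffice for the angle function; this is where the hypothesis of no focal points is quietly used.

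I expect the main obstacle to be the rigorous verification that $c$ is a simple closed curve and the sign and range analysis of the corner angles at $t_0$ and $t_\ell$. The last-exit choice of $t_\ell$ (and of $t_0$) is exactly what makes the curve simple and pins down the exterior angles. I would first try to handle the possible tangency $\tau\equiv\pm\pi/2 \pmod{2\pi}$ at a last exit: there the corner degenerates, but the exterior angle is still $\pm\pi/2$, or $0$ after smoothing, so the bound survives.
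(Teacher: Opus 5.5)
Your proposal is correct and is essentially the paper's proof: the same set $S=\{t_\ell\}$, the same four-sided curve ($\Sigma_\ell$ traversed in the opposite orientation), and the same Hopf estimate. Only the crude bound $|\beta_i|\le\pi$ is needed, so your corner-window and tangency discussion can be dropped; note also that a tangency with $\dot{\widetilde\sigma}_{\widetilde\theta}(t_\ell)$ a positive multiple of $-Y$ is a cusp with $\beta=\pm\pi$, not $\pm\pi/2$.

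Your choice $R>\max_{[t_0,t_\ell]}v$, justified by compactness and the injectivity of $\Phi_{\widetilde\theta,\gamma}$, is simpler than the paper's Step~1. That step invokes Lemma~\ref{faixa plana}, which is where the paper explicitly uses the absence of focal points, to make the top side avoid all of $\widetilde\sigma_{\widetilde\theta}[0,\infty)$, more than simplicity of the curve requires. You should fix $\ell>t_0$ rather than $\ell>0$, as the paper does, so that $t_0<t_\ell$ and the arc $\widetilde\sigma_{\widetilde\theta}|_{[t_0,t_\ell]}$ makes sense.
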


\begin{proof}
Keep the notation above and let $t_\ell,\rho_\ell$ be as in
Lemma~\ref{ultima salida}. Fix $\ell>t_0$. We bound
$\tau_{\widetilde\theta}(t_\ell)$ by a constant independent of $\ell$;
since $t_\ell\geq\ell$, the set $S$ is then unbounded and the lemma follows.

\smallskip
\smallskip
\noindent\textbf{Step 1: a Busemann orbit avoiding
$\widetilde\sigma_{\widetilde\theta}$.}

Since $\widetilde\sigma_{\widetilde\theta}$ and $\gamma$ are at Hausdorff
distance less than $D$, and $u$ is the Busemann level of
$\widetilde\sigma_{\widetilde\theta}$, the portion of
$\widetilde\sigma_{\widetilde\theta}[0,\infty)$ lying in the strip
$b_\gamma^{-1}[-\ell,0]$ stays within $\widetilde g$-distance $D$ of the
compact arc $\gamma[-D,\ell+D]$. Let $s_0=s_0(-D,\ell+D,D)$ be given by
Lemma~\ref{faixa plana} and set
\[
r^*:=\max\bigl\{s_0,\rho_0,\rho_\ell\bigr\}+1 .
\]
Since $r^*\geq s_0$, the Busemann orbit
$c_2(t)=\Phi_{\widetilde\theta,\gamma}(t,r^*)$ satisfies
\[
c_2[-D,\ell+D]\cap V_D\bigl(\gamma[-D,\ell+D]\bigr)=\emptyset ,
\]
and $r^*>\max\{\rho_0,\rho_\ell\}$. Hence $c_2[0,\ell]$ is disjoint from
$\widetilde\sigma_{\widetilde\theta}[0,\infty)$. Indeed, $c_2[0,\ell]$ is
contained in the strip $b_\gamma^{-1}[-\ell,0]$, since $b_\gamma(c_2(t))=-t$ by
\eqref{busemann coord}. A point of $\widetilde\sigma_{\widetilde\theta}$
outside the strip therefore cannot lie on $c_2[0,\ell]$, and a point inside
the strip lies in the tube $V_D(\gamma[-D,\ell+D])$, which $c_2$ avoids.

\smallskip
\noindent\textbf{Step 2: the rectangle.}
Set
\[
P_1=\Phi_{\widetilde\theta,\gamma}(0,\rho_0)
=\widetilde\sigma_{\widetilde\theta}(t_0),
\qquad
P_2=\Phi_{\widetilde\theta,\gamma}(\ell,\rho_\ell)
=\widetilde\sigma_{\widetilde\theta}(t_\ell),
\]
\[
Q_1=\Phi_{\widetilde\theta,\gamma}(0,r^*),
\qquad
Q_2=\Phi_{\widetilde\theta,\gamma}(\ell,r^*),
\]
and consider the four sides
\[
\begin{aligned}
c_1(r)&=\Phi_{\widetilde\theta,\gamma}(0,r),
&& r\in[\rho_0,r^*],\\
c_2(t)&=\Phi_{\widetilde\theta,\gamma}(t,r^*),
&& t\in[0,\ell],\\
c_3(r)&=\Phi_{\widetilde\theta,\gamma}(\ell,r^*+\rho_\ell-r),
&& r\in[\rho_\ell,r^*],\\
c_4&=\bigl(\widetilde\sigma_{\widetilde\theta}|_{[t_0,t_\ell]}\bigr)^{-1},
\end{aligned}
\]
together with the closed, piecewise $C^1$ curve
\[
\Sigma_\ell:=c_1*c_2*c_3*c_4 ,
\]
which runs $P_1\to Q_1\to Q_2\to P_2\to P_1$ and is well defined because
$t_0<t_\ell$ by Lemma~\ref{ultima salida}.

\smallskip
\noindent\textbf{Step 3: $\Sigma_\ell$ is simple.}
The sides $c_1,c_2,c_3$ are coordinate arcs, so by the
injectivity of $\Phi_{\widetilde\theta,\gamma}$ they
meet one another exactly at $Q_1$ and $Q_2$. It remains to show that
$c_4$ meets them only at $P_1$ and $P_2$.

Suppose $\widetilde\sigma_{\widetilde\theta}(l)\in c_1$ with
$l\in[t_0,t_\ell]$. Then $u(l)=0$, so $l\in I_0$ and hence $v(l)\leq\rho_0$
by the maximality defining $\rho_0$; since every point of $c_1$ has
$r\geq\rho_0$, we get $v(l)=\rho_0$, and uniqueness in
Lemma~\ref{ultima salida} gives $l=t_0$. The same argument at the level
$\ell$ shows that any intersection with $c_3$ forces $l=t_\ell$.
Finally $c_4\cap c_2=\emptyset$ by Step 1. Hence $\Sigma_\ell$ is
simple.

\smallskip
\noindent\textbf{Step 4: Hopf's formula.}
Let $X_{\widetilde\theta}=\partial_t\Phi_{\widetilde\theta,\gamma}$ and
$Y_{\widetilde\theta}=\partial_r\Phi_{\widetilde\theta,\gamma}$ be the
coordinate fields, which form an orthogonal positively oriented frame, and
recall that $\tau_{\widetilde\theta}$ measures the oriented angle from
$X_{\widetilde\theta}$ to the velocity of the curve. Along $c_1$,
$c_2$ and $c_3$ the velocity is a positive multiple of $Y$, $X$
and $-Y$ respectively, so this angle is \emph{constant}, equal to $\pi/2$,
$0$ and $-\pi/2$; in particular the corresponding \emph{variations} vanish.
Along $c_4$ the angle is $\tau_{\widetilde\theta}+\pi$, traversed
backwards, so its variation equals
$\tau_{\widetilde\theta}(t_0)-\tau_{\widetilde\theta}(t_\ell)$.\\
Applying Lemma~\ref{Hopf's Lemma} to $\Sigma_\ell$ we obtain
\[
\tau_{\widetilde\theta}(t_0)-\tau_{\widetilde\theta}(t_\ell)
=\pm2\pi-\sum_{i=1}^{4}\beta_i ,
\]
where $\beta_1,\dots,\beta_4$ are the exterior angles of $\Sigma_\ell$ at
$Q_1,Q_2,P_2,P_1$ and the sign is determined by the orientation of
$\Sigma_\ell$. Since $|\beta_i|\leq\pi$, the right-hand side is bounded by
$6\pi$ independently of $\ell$, whence
\[
\bigl|\tau_{\widetilde\theta}(t_\ell)\bigr|
\leq\bigl|\tau_{\widetilde\theta}(t_0)\bigr|+6\pi=:C .
\]
\end{proof}

\begin{figure}[h!]
\centering
\begin{tikzpicture}[scale=1.2,>=stealth]

\fill[black!8] (-1.2,-0.85) rectangle (6.9,0.85);
\node[black!45,right,font=\footnotesize] at (6.55,-0.62) {$V_D(\gamma)$};

\draw[black!22] (-1.2,1.5) -- (6.9,1.5);
\draw[black!22] (-1.2,2.2) -- (6.9,2.2);

\draw[thick] (-1.2,0) -- (6.9,0) node[right] {$\gamma$};

\draw[black!55] (0,-1.05) -- (0,3.45);
\draw[black!55] (5,-1.05) -- (5,3.45);
\node[below,font=\footnotesize] at (0,-1.1) {$H^+_\gamma(0)$};
\node[below,font=\footnotesize] at (5,-1.1) {$H^+_\gamma(\ell)$};

\draw[very thick] (0,3) -- (5,3);
\node[above] at (2.5,3) {$c_2$};
\draw[very thick] (0,0.45) -- (0,3);
\node[right] at (0.12,1.75) {$c_1$};
\draw[very thick] (5,3) -- (5,0.52);
\node[right] at (5.12,1.75) {$c_3$};

\draw[thick,black!65] plot[smooth,tension=0.75] coordinates {
  (0.45,-0.70) (-0.30,-0.62) (-0.85,-0.48) (-0.55,-0.42) (0,-0.35)
  (0.55,-0.18) (0.20,0.10) (-0.30,0.30) (0,0.45)
  (1.1,0.52) (1.9,0.12) (2.7,0.40) (3.6,0.00) (4.4,0.28)
  (5,-0.15) (5.70,-0.05) (5.95,0.25) (5.40,0.45) (5,0.52)};

\draw[thick,black!65] plot[smooth,tension=0.8] coordinates {
  (5,0.52) (5.35,0.72) (5.95,0.55) (6.55,0.75) (6.95,0.60)};
\node[black!65] at (2.5,0.78) {$\widetilde\sigma_{\widetilde\theta}$};
\node[black!65,font=\footnotesize] at (2.5,-0.55) {$c_4$ (reversed)};

\fill (0,0.45) circle (1.8pt) node[above left] {$P_1$};
\fill (5,0.52) circle (1.8pt) node[above right] {$P_2$};
\fill (0,3)    circle (1.8pt) node[above left] {$Q_1$};
\fill (5,3)    circle (1.8pt) node[above right] {$Q_2$};

\draw[black!65,thick,fill=none] (0,-0.35) circle (2.2pt);
\draw[black!65,thick,fill=none] (5,-0.15) circle (2.2pt);

\draw[<->,black!50] (-0.75,0.45) -- (-0.75,3);
\node[left,font=\footnotesize] at (-0.75,1.72) {$r^*-\rho_0$};

\end{tikzpicture}
\caption{The curve $\Sigma_\ell$ in the horocyclic chart
$\Phi_{\widetilde\theta,\gamma}$. Vertical lines are horocycles, horizontal
lines are Busemann orbits. The open dots are the other intersections of
$\widetilde\sigma_{\widetilde\theta}[0,\infty)$ with $H^+_\gamma(0)$ and
$H^+_\gamma(\ell)$; the vertices $P_1,P_2$ are the ones of maximal $r$,
which is what makes $\Sigma_\ell$ simple.}
\label{fig:rectangulo}
\end{figure}


    

\begin{proof}[Proof of Theorem \ref{Teo A}]
Let $\theta\in T_1M$, let $\widetilde\theta$ be a lift, and set
\[
F(\theta)=(f\circ\pi)(\theta)-k(\theta)\sin\bigl(\tau(\theta)\bigr).
\]
By Corollary~\ref{k-em-M} and Lemma~\ref{curvatura=lorentz}, Liouville's
formula along $\widetilde\sigma_{\widetilde\theta}$ reads
\[
\frac1T\int_0^TF\bigl(\psi_s(\theta)\bigr)\,ds
=\frac{\tau_{\widetilde\theta}(T)-\tau_{\widetilde\theta}(0)}{T}.
\]
By Birkhoff's Ergodic Theorem the limit
$F^*(\theta)=\lim_{T\to\infty}\frac1T\int_0^TF(\psi_s(\theta))\,ds$ exists
$\mu_l$-a.e.\ and $\int_{T_1M}F\,d\mu_l=\int_{T_1M}F^*\,d\mu_l$. By
Lemma~\ref{bounded tau} there are $C>0$ and $T_n\to\infty$ with
$|\tau_{\widetilde\theta}(T_n)|\leq C$, so the right-hand side above tends
to $0$ along $T_n$; since the limit exists, $F^*=0$ $\mu_l$-a.e.\ and
therefore
\[
\int_{T_1M}(f\circ\pi)\,d\mu_l
=\int_{T_1M}k\,\sin(\tau)\,d\mu_l .
\]
By Corollary~\ref{horociclo curvatura no-negativa} we have $k\geq0$, whence
\[
\left|\int_{T_1M}(f\circ\pi)\,d\mu_l\right|
\leq\int_{T_1M}k\,|\sin(\tau)|\,d\mu_l
\leq\int_{T_1M}k\,d\mu_l .
\]
The result follows from $\int_{T_1M}(f\circ\pi)\,d\mu_l
=2\pi\int_Mf\,d\operatorname{vol}_g$.
\end{proof}


\begin{thebibliography}{99} 




\bibitem{BCS} Bao, D., Chern, S. S.,  Shen, Z. (2012). An introduction to Riemann-Finsler geometry (Vol. 200). Springer Science and Business Media.

\bibitem{BP2002} Burns, Keith, and Gabriel P. Paternain. "Anosov magnetic flows, critical values and topological entropy." Nonlinearity 15.2 (2002): 281-314.







\bibitem{Eberlein1973}Eberlein, Patrick. "When is a geodesic flow of Anosov type? I." Journal of Differential Geometry 8.3 (1973): 437-463.

\bibitem{EberleinONeill1973}Eberlein, Patrick, and Barrett O’Neill. "Visibility manifolds." Pacific Journal of Mathematics 46.1 (1973): 45-109.
\bibitem{Eschenburg1977} Eschenburg, Jost-Hinrich. "Horospheres and the stable part of the geodesic flow." (1977).


\bibitem{GomesRuggiero2013} Gomes, J. B.; Ruggiero, R. O.
\textit{On Finsler surfaces without conjugate points.}
Ergodic Theory Dynam. Systems \textbf{33} (2013), no.~2, 455--474.

\bibitem{Green1958} Green, L. W. \textit{A theorem of E. Hopf.} Michigan Math. J. \textbf{5} (1958), 31--34.


\bibitem{Heintze1977} Heintze, Ernst, and Hans-Christoph Im Hof. "Geometry of horospheres." Journal of Differential Geometry 12.4 (1977): 481-491.










\bibitem{Paternain1994}Paternain, Gabriel P. "On Anosov energy levels of Hamiltonians on twisted cotangent bundles." Boletim da Sociedade Brasileira de Matemática-Bulletin/Brazilian Mathematical Society 25.2 (1994): 207-211.
\bibitem{Pesin1977}Pesin, Ja B. "Geodesic flows on closed Riemannian manifolds without focal points." Mathematics of the USSR-Izvestiya 11.6 (1977): 1195-1228.

\bibitem{Paternain2006} Paternain, Gabriel. "Magnetic rigidity of horocycle flows." Pacific journal of mathematics 225.2 (2006): 301-323.







\bibitem{PS}Peyerimhoff, N., and K. F. Siburg. "The dynamics of magnetic flows for energies above Mané’s critical value." Israel Journal of Mathematics 135.1 (2003): 269-298.
\end{thebibliography}
\end{document}